\author[C. Khor]{Calvin Khor}
\address{School of Mathematical Sciences, Beijing Normal
University, \\ Beijing 100875, P. R. China.}
\email{C.Khor@bnu.edu.cn}
\author[J.L. Rodrigo]{Jos\'e L. Rodrigo}
\address[J.L. Rodrigo]
{Mathematics Research Centre,
Zeeman Building, \\
University of Warwick,
Coventry CV4 7AL,\\
United Kingdom}
\email{J.Rodrigo@warwick.ac.uk}
\newcommand{\del}{\partial}
\newcommand{\nrml}{N}
\newcommand{\mycurve}{z}
\newcommand{\dq}[5]
{
\frac{  {#1}({#4}) - {#1}({#5})          }
     {  |{#2}({#4}) - {#2}({#5})|^{#3}   }  
} 
\newcommand{\fd}{\mathcal D}
\newcommand{\myitem}[1]{%
\item[#1]\protected@edef\@currentlabel{#1}%
}
\newtheorem{thm}{Theorem}[section]
\newtheorem{lem}[thm]{Lemma}
\newtheorem{prop}[thm]{Proposition}
\theoremstyle{definition}
\newtheorem{defn}[thm]{Definition}
\newtheorem{example}[thm]{Example}
\theoremstyle{remark}
\newtheorem{rem}[thm]{Remark}
\numberwithin{equation}{section}
\DeclareMathOperator{\mysin}{Sin} 
\newcommand{\FTC}[0]{\mathcal I}
\newcommand{\CK}{Cauchy--Kowalevski}
\newcommand{\myI}{\mathcal I}
\newcommand{\myIntTerm}[0]{\zeta} 
\newcommand{\myIBA}{\FTC(\mycurve_s)}
\newcommand{\myICA}{\FTC(\mycurve_{ss})}
\newcommand{\myIDA}{\FTC(\mycurve_{sss})}
\newcommand{\ACKu}{u}   
\begin{document}

\title[Local Existence of Analytic SFs for Singular SQG]{Local Existence of Analytic Sharp Fronts for Singular SQG  }
\date{\DTMnow}

\begin{abstract}
    In this paper, we prove local existence and uniqueness of analytic sharp-front solutions to a generalised SQG equation  by the use of an abstract Cauchy--Kowalevskaya theorem. Here, the velocity is determined by $u = |\nabla|^{-2\beta}\nabla^\perp\theta $ which (for $1<\beta\leq 2$) is more singular than in SQG.  This is achieved despite the appearance of pseudodifferential operators of order higher than one in our equation, by recasting our equation in a suitable integral form.  We also provide a full proof of the abstract version of the Cauchy--Kowalevskaya theorem we use.
\end{abstract}

\maketitle 

\setcounter{tocdepth}{1}
\tableofcontents
\section{Introduction}

The $\beta$-generalised SQG equation, $\beta\in(0,2]$ is the equation for the active scalar $\theta = \theta(x,t)\in\mathbb R,x\in\mathbb R^2,t\ge 0$,
\[ \theta_t + (|\nabla|^{-2+\beta}\nabla^\perp \theta )\cdot \nabla \theta = 0. \]
Here, $|\nabla|^{-2+\beta}$ is defined as the Fourier multiplier $|\xi|^{-2+\beta   }$.
Choosing $\beta = 0$  corresponds to the two-dimensional incompressible Euler equations in vorticity form, while $\beta=1$ corresponds to the Surface Quasi-Geostrophic (SQG) equation. In this paper, we consider special solutions to the family of equations for $\beta\in(1,2)$.  These solutions are of the form of an indicator of an evolving simply connected set with smooth boundary, termed sharp front solutions,
\[ \theta(x,t) = \mathbf 1_{x\in A(t)},\]
the evolution can be rephrased (by following the derivation in \cite{rodrigo2004vortex}) as a contour dynamics equation (CDE) for the boundary curve $z=\del A(t)$, namely 
\[ \mycurve_t(s,t)\cdot\nrml(s,t) = -C_\beta\int_{\mathbb T} \frac{(\mycurve_s(s_*,t) - \mycurve_s(s,t))\cdot\nrml(s,t) }{|\mycurve(s_*,t) - \mycurve(s,t)|^\beta} \dd{s_*},\]
where $\nrml$ is the unit normal to $\mycurve$, and $C_\beta$ should be the constant $ \frac{\Gamma(\beta/2)}{\pi 2^{2-\beta} \Gamma((2-\beta)/2)}$, 
 but for ease of presentation we will set $C_\beta := 1$.
The torus $\mathbb T$  is defined as $\mathbb T := \mathbb R/\mathbb Z$, and we will take $[0,1)$ as its fundamental domain in what follows.
This integral term is parameterisation independent, and also  finite  so long as the parameterisation is regular and sufficiently smooth, and the curve does not self-intersect. 
By a suitable reparameterisation of $s\in\mathbb T$, this is equivalent to the contour dynamics equation 
\begin{align} \mycurve_t(s,t) 
&= - \int_{\mathbb T} \frac{\mycurve_s(s_*,t) - \mycurve_s(s,t) }{|\mycurve(s_*,t) - \mycurve(s,t)|^\beta} \dd{s_*} + \lambda(s,t) \mycurve_s(s,t)\label{eqn-equiv-SF-system} \\
&=: \myIntTerm(s,t)+\lambda(s,t) \mycurve_s(s,t), \label{eqn-equiv-SF-system-short} \end{align} 
for some function $\lambda=\lambda(s,t)\in\mathbb R$, satisfying $\lambda(0,t)=0$ for every $t\ge 0$.  The integral term $\myIntTerm(s,t)$ is defined by \eqref{eqn-equiv-SF-system}. 

The corresponding CDE for the Euler equation with $\beta = 0$, termed the vortex patch problem, was first derived in \cite{zabusky197996} and its systematic study can be found in the book \cite{majda2002}. The study of sharp fronts for the SQG equation was initiated in \cite{rodrigo2005evolution}, where existence and uniqueness was proven in the class of smooth periodic graphs. Gancedo proved existence in \cite{gancedo2008existence} and later uniqueness \cite{gancedo2016arXiv160506663C} for Sobolev data and closed contours. The interpolated equations for $\beta \in (0,1)$ was further studied in \cite{gancedo2008existence},\cite{gancedo2018arXiv181100530G}, \cite{kiselevdoi:10.1002/cpa.21677}, and more recently the more singular sharp fronts ($\beta \in (1,2)$ as in this paper, or other variants of SQG) have been studied in \cite{chae2011inviscid},\cite{chae2012generalized},\cite{ohkitani2012asymptotics}.     Non-periodic graph-type sharp fronts of \eqref{eqn-singular-sqg} were studied in the recent papers, and  of Hunter, Shu, and Zhang. They gave two different approaches in \cite{hunter2018regularized} and \cite{hunter2019contour} to deriving the correct contour dynamics equation in this setting, and proved \cite{hunter2018global} global existence under a certain smallness condition. They also study two-front dynamics \cite{hunter2019twofront} and approximate equations for sharp fronts \cite{hunter2018local}. Finally, a second paper \cite{khor2019sfasf} by the authors has been prepared alongside this one that studies sharp fronts and almost-sharp fronts of the $\beta$-generalised SQG equation for a bounded domain in more detail.

In this paper, we are specifically interested in analytic sharp fronts, as in \cite{fefferman2011analytic}. The motivation is as follows. There are striking similarities of the 3D Euler equation with SQG ($\beta=1$)  (see \cite{CMT1, CMT2} for example),  that indicate that a sharp front is a good model of a vortex filament. Therefore, a key motivation for the study of the SQG sharp front is in developing methods that may transfer to the study of the vortex filaments. 

It is also natural to study smooth approximations to the sharp fronts. These approximations, with a fixed time of existence despite arbitrarily large gradient(corresponding to the thickness of the transition region)  have been called almost-sharp fronts, and have been studied for $\beta \in(0,1)$ in \cite{atkins2013almost}, and for SQG in  \cite{cordoba2004almost}, \cite{fefferman2012spine},\cite{fefferman2012almost},  \cite{fefferman2011analytic}, and \cite{fefferman2015construct}. These last three papers lay out a  proof strategy that may be possible to extend to the range $\beta\in(1,2)$.  Therefore this paper represents the first step in proving the existence of almost-sharp fronts, in the sense of \cite{fefferman2015construct}, for our more singular equation with $\beta \in (1,2)$. We are interested in analytic solutions, as opposed to the Sobolev regularity found in \cite{chae2012generalized}, since this programme introduces Prandtl-like terms that require the analytic setting. To this end, the main theorem of this paper is   
\begin{thm} Let $\mycurve_0=\mycurve_0(s):\mathbb T \to \mathbb R^2$ be an analytic, simple, closed curve. Then there exists $T>0$ and an analytic solution $\mycurve = \mycurve(s,t) : \mathbb T \times [0,T] \to \mathbb R^2$ to \eqref{eqn-equiv-SF-system} such that $\mycurve(s,0) = \mycurve_0(s)$. \label{main-thm}
\end{thm}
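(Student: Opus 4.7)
The plan is to cast \eqref{eqn-equiv-SF-system-short} as an ODE $\partial_t\mycurve=F(\mycurve)$ on a scale of Banach spaces of analytic curves and to invoke the abstract Cauchy--Kowalevskaya theorem developed later in the paper. For $\rho>0$, let $B_\rho$ denote the space of analytic maps $\mycurve:\mathbb T\to\mathbb R^2$ admitting a holomorphic extension (with complexified target $\mathbb C^2$) to the complex strip $\Omega_\rho:=\{s\in\mathbb C/\mathbb Z:|\operatorname{Im}s|<\rho\}$, equipped with the norm $\|\mycurve\|_\rho:=\sup_{\Omega_\rho}|\mycurve(s)|$ (or a comparable weighted $\ell^1$ Fourier norm). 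These spaces form a nested scale satisfying the Cauchy estimate $\|\mycurve_s\|_{\rho'}\leq(\rho-\rho')^{-1}\|\mycurve\|_\rho$. Inside $B_\rho$ I would single out an open admissible set $\mathcal A_\rho$ of simple regular curves subject to a uniform chord-arc condition $|\mycurve(s_1)-\mycurve(s_2)|\geq c|s_1-s_2|_{\mathbb T}$, extended by continuity to a complex neighbourhood of the diagonal in $\Omega_\rho\times\Omega_\rho$; the initial datum $\mycurve_0$ lies in some $\mathcal A_{\rho_0}$ by hypothesis.

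\textbf{Closing the system.} I would fix the tangential parameter $\lambda=\lambda(\mycurve)$ by imposing a convenient parameterisation, e.g.\ constant speed $|\mycurve_s|\equiv\text{const}$; differentiating this constraint in $t$ and using \eqref{eqn-equiv-SF-system-short} gives a linear ODE in $s$ on $\mathbb T$ whose unique solution with $\lambda(0,t)=0$ depends analytically on $\mycurve$ and is controlled in $B_\rho$ by $\myIntTerm(\mycurve)$. The problem becomes $\partial_t\mycurve=F(\mycurve):=\myIntTerm(\mycurve)+\lambda(\mycurve)\mycurve_s$, and it remains to verify the one-derivative-loss Lipschitz bound
\[\|F(\mycurve_1)-F(\mycurve_2)\|_{\rho'}\leq\frac{C}{\rho-\rho'}\|\mycurve_1-\mycurve_2\|_\rho\]
uniformly on a bounded ball in $\mathcal A_\rho$, which is exactly the input needed by the abstract CK theorem.

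\textbf{The main estimate via contour deformation.} The central obstacle is that $\myIntTerm$ is a pseudodifferential operator of order $\beta>1$ on $\mycurve$, whereas the abstract CK theorem tolerates only a single derivative of loss. Keeping $\myIntTerm$ in integral form circumvents this: inside the integrand the only explicit derivative on $\mycurve$ is the single factor $\mycurve_s$, and the singular factor $|\mycurve(s_*)-\mycurve(s)|^{-\beta}$ is to be handled by analytic continuation rather than by differentiation. Concretely, for $s\in\Omega_{\rho'}$ I would complexify the denominator via the principal branch of $\bigl((\mycurve(s_*)-\mycurve(s))\cdot(\mycurve(s_*)-\mycurve(s))\bigr)^{\beta/2}$ and then deform the $s_*$-contour off $\mathbb T$ into $\Omega_\rho$, pushing it to imaginary distance $\sim\rho-\rho'$ from $s$. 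The chord-arc condition on $\Omega_\rho$ then forces $|\mycurve(s_*)-\mycurve(s)|\gtrsim\rho-\rho'$ along the deformed contour, so the kernel is pointwise bounded there; combining this with $\|\mycurve_s\|_{\rho''}\lesssim(\rho-\rho'')^{-1}\|\mycurve\|_\rho$ at an intermediate radius $\rho''$ and optimising over $\rho''$ produces the required single loss $(\rho-\rho')^{-1}$. The corresponding bound on differences follows by the same estimate applied to the difference of integrands, which is linear in $\mycurve_1-\mycurve_2$ and its first derivative.

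\textbf{Conclusion and main obstacle.} With the Lipschitz estimate established, the abstract CK theorem produces a unique analytic solution on a small time interval, yielding Theorem \ref{main-thm}. The hardest step will be the contour-deformation argument: propagating the chord-arc bound from $\mathbb T$ to $\Omega_\rho$, choosing a deformation contour that avoids $s_*=s$ uniformly in $s\in\Omega_{\rho'}$, and justifying that the deformation leaves the value of $\myIntTerm$ unchanged in the presence of the multivalued $\beta/2$-power will all require care. A secondary technical point is showing that $\lambda(\mycurve)$ is well-defined, analytic in $\mycurve$, and controlled by $\myIntTerm$ in the same scale, and that the admissible set $\mathcal A_\rho$ persists along the fixed-point iteration---both of which should follow from continuity once the core a priori estimate is pushed through.
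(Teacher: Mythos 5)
Your overall strategy (a scale of analytic strip-spaces plus an abstract Cauchy--Kowalevskaya theorem, with $\lambda$ determined by a constant-speed parameterisation) matches the paper's, but the central estimate you propose is false, and this is precisely the difficulty the paper is built around. The operator $\mycurve\mapsto\myIntTerm(\mycurve)$ is genuinely of order $\beta$ on $\mycurve$: near $s_*=s$ the kernel behaves like $L^{-\beta}|s_*-s|^{-\beta}$, so the linearisation at a circle is a Fourier multiplier $\sim \text{i}\,\mathrm{sgn}(k)\,|k|^{\beta}$, and its operator norm from $B_\rho$ to $B_{\rho'}$ is $\sup_k |k|^\beta \text{e}^{-2\pi(\rho-\rho')|k|}\sim (\rho-\rho')^{-\beta}$. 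Since $\beta>1$, no one-derivative-loss Lipschitz bound of the form $\|F(\mycurve_1)-F(\mycurve_2)\|_{\rho'}\leq C(\rho-\rho')^{-1}\|\mycurve_1-\mycurve_2\|_\rho$ can hold, and the contour-deformation-plus-optimisation heuristic cannot rescue it: deforming the $s_*$-contour to distance $\rho''-\rho'$ bounds the kernel by $(\rho''-\rho')^{-\beta}$, the Cauchy estimate for $\mycurve_s$ costs another $(\rho-\rho'')^{-1}$, and optimising over $\rho''$ yields a loss $(\rho-\rho')^{-(\beta+1)}$, which is worse, not better. You cannot estimate your way below the true order of the operator. (For $\beta\le 1$ your argument would be essentially the classical one; the whole point of this paper is the range $\beta\in(1,2)$.)

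The missing idea is structural, not estimative. The paper augments the system to the unknowns $(\mycurve,\mycurve_s,\mycurve_{ss},\Gamma)$, uses the finite-difference identity $\fd\mycurve=\FTC(\mycurve_s)s_*$ and the expansion of $\Gamma^\beta$ to isolate, in the equation for $h=\mycurve_{ss}$, the single most singular piece as a \emph{constant-coefficient convolution} $L(t)^{-\beta}\mathcal H_\beta(h)$ with $\mathcal H_\beta(h)=-\int_{\mathbb T}\bigl(h_s(s+s_*)-h_s(s)\bigr)|\mysin s_*|^{-\beta}\dd{s_*}$. The key observation (Lemma \ref{lem-skew-sym}) is that the symbol of $\mathcal H_\beta$ is purely imaginary, so $\del_t-L(t)^{-\beta}\mathcal H_\beta$ generates a unitary group and can be inverted explicitly by Fourier series with no loss of analyticity radius; only the remainder terms, which after the rewriting involve at most one derivative of the unknowns, need to satisfy the Cauchy-type hypothesis of the abstract theorem. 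Without extracting and inverting this skew-adjoint leading part, the fixed-point iteration you describe cannot close. A secondary point: your $\lambda$ is not obtained by solving an ODE in $s$; imposing $|\mycurve_s|\equiv L(t)$ determines $\lambda$ by direct integration (formula \eqref{eqn-defn-lambda}), and one must also check (as the paper does) that $\mycurve_s\cdot\myIntTerm_s$ and $\mycurve_s\cdot\myIntTerm_{ss}$ can be rewritten to remove the highest derivatives before they enter $\lambda$, $\lambda_s$, $\lambda_{ss}$.
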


The remainder of this paper is organised as follows. In Section \ref{ch5section-prelim}, we define the basic objects of study and some basic geometric notation.  In Section \ref{ch5section-computing-ds}, we establish some formulas for derivatives  of $\myIntTerm$. In Section \ref{ch5section-rewriteEqn}, we carefully rewrite the evolution equations in a form suitable for the application of the abstract Cauchy--Kowalevski Theorem. In section \ref{ch5section-applyCK}, we state and prove a version of the abstract Cauchy--Kowalevski Theorem, and then we apply it to prove the main theorem (Theorem \ref{main-thm}) by using a suitable scale of Banach spaces.

\section{Preliminaries}\label{ch5section-prelim}
To obtain a useful expression for $\lambda$, we will further reparameterise $s$ so that 
\begin{align} 
|\mycurve_s(s,t)|^2=\mycurve_s(s,t) \cdot \mycurve_{s}(s,t)= L(t)^2 ,\quad  s\in \mathbb T, t\ge 0
\label{cancellation-from-parameterisation} .   
\end{align}

That is to say, $\mycurve_s \cdot \mycurve_{ss}$ vanishes identically. In this  parameterisation, $s$ is not arc-length, but the vectors $\mycurve_s(s,t)$ have  length $L(t)$ independent of  $s$. The quantity $L(t)$  is the  length of the curve $\mycurve$ at time $t$.
As in \cite{gancedo2008existence}, the function $\lambda$ can then be written explicitly in terms of $\mycurve$ as follows. Taking the $s$ derivative of $\mycurve_t$ in equation \eqref{eqn-equiv-SF-system-short} we find the equation 
\begin{align}
     \mycurve_{st} = \myIntTerm_s + \lambda\mycurve_{ss} + \lambda_s \mycurve_s. \label{eqn-mycurve_s-evo}
\end{align}
Therefore
\begin{align}\frac12 (L^2)'= L'L = \mycurve_{s}\cdot\mycurve_{st} = \mycurve_s \cdot \myIntTerm_s + \lambda_s L^2 \label{eqn-deriv-L} ,\end{align}
where we have substituted  \eqref{eqn-mycurve_s-evo} in the last equality. Now \eqref{eqn-deriv-L} yields
\begin{align}
    \frac{L'}L = \frac1{L^2} \mycurve_s \cdot \myIntTerm_s + \lambda_s.\label{eqn-intermediate-Lstep}
\end{align}
Since $\lambda$ is a periodic function in $s$, integrating \eqref{eqn-intermediate-Lstep} over $\mathbb T$, we obtain
 \begin{align}
      \frac{L'}L = \int_{\mathbb T}  \frac{\mycurve_s}{L^2} \cdot \myIntTerm_s \dd{s}. \label{eqn-l-prime-by-l}
 \end{align}

Hence, integrating \eqref{eqn-intermediate-Lstep} from $0$ to $s$ (using $\lambda(0,t)=0$) and dividing by $L^2$, we see that (using \eqref{eqn-l-prime-by-l})
\begin{align}
\lambda(s,t) 
&= s\frac{L'(t)}{L(t)} - 
\int_0^s \frac{\mycurve_{s}(s_1,t)}{L(t)^2}\cdot \myIntTerm_s(s_1,t)    \dd{s_1} \notag \\
&= -s  \int_{\mathbb T} \frac{\mycurve_{s}(s_1)}{|\mycurve_{s}(s_1)|^2}\cdot \del_{s_1} \int_\mathbb T \dq{\mycurve_s}{\mycurve}{\beta}{s_0}{s_1}   \dd{s_0} \dd{s_1 } 
\notag\\ 
&\quad  + \int_0^s \frac{\mycurve_{s}(s_1)}{|\mycurve_{s}(s_1)|^2 }\cdot \del_{s_1} \int_\mathbb T \dq{\mycurve_s}{\mycurve}{\beta}{s_0}{s_1}   \dd{s_0} \dd{s_1 }, \label{eqn-defn-lambda}
\end{align}
(hiding the $t$ dependence for legibility reasons) which is an expression for $\lambda$ completely determined by the values of $\mycurve$ and $\mycurve_s$. Conversely, if $\lambda$ is defined by \eqref{eqn-defn-lambda}, 
and $\mycurve$ evolves via \eqref{eqn-equiv-SF-system}, then $\lambda(0,t) = 0$ for every $t$, and differentiating \eqref{eqn-equiv-SF-system} readily yields
 \[ \frac12 \del_t |\mycurve_s|^2 = \myIntTerm_s \cdot \mycurve_s + \frac\lambda2 \del_s |\mycurve_s|^2 + \lambda_s |\mycurve_s|^2 = \frac\lambda2 \del_s |\mycurve_s|^2 + \mu(t) |\mycurve_s|^2,  \]
 where $\mu(t)$ is given by  \begin{align}
\mu(t)&=\lambda_s(s,t) + \frac{\myIntTerm_s(s,t)\cdot \mycurve_s(s,t) }{ |\mycurve_s (s,t) |^2 }  
\\& = -\int_{\mathbb T} \frac{\mycurve_{s}(s_1)}{|\mycurve_s(s_1)|^2}\cdot \del_{s_1} \int_\mathbb T \dq{\mycurve_s}{\mycurve}{\beta}{s_0}{s_1 }   \dd{s_0} \dd{s_1 }.      
 \end{align}
 One obtains by the method of characteristics for $F(s,t)= |\mycurve_s|^2(s,t)$, $F_0(s) = |\mycurve_s|^2(s,0)$,
 \[ F(s,t) = F_0\left (s+\int_0^t\lambda(s,\tau)\dd{\tau}\right )\exp\left(\int_0^t \mu(\tau) \dd{\tau}\right).  \]
 Hence, if $ \dv{F_0}{s}=0$, we have $\del_s F = 0$ for all times, and \eqref{eqn-mycurve_s-evo} is satisfied. This proves the following proposition:
 \begin{prop}[Parameterisation determines $\lambda$, \cite{gancedo2008existence}]
     Suppose that $\lambda(0,t)=0$, and that $\mycurve=\mycurve(s,t)$ is a smooth solution to the sharp front equation \eqref{eqn-equiv-SF-system} with initial condition $\mycurve_0=\mycurve_0(s)$ parameterised so that
     $\del_s \mycurve_0\cdot\del_s^2\mycurve_0 = 0$. Then $|\mycurve_s|^2$ is independent of $s$ for all $t$, or equivalently $\mycurve_s \cdot \mycurve_{ss} = 0$, if and only if  $\lambda$ is given by \eqref{eqn-defn-lambda}.
 \end{prop}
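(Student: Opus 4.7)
The plan is to prove both implications by direct calculation, essentially formalising the derivations laid out in the paragraphs immediately preceding the statement. For the ``only if'' direction, I would assume $|\mycurve_s(\cdot,t)|^2 = L(t)^2$ is independent of $s$. Differentiating \eqref{eqn-equiv-SF-system-short} in $s$ gives \eqref{eqn-mycurve_s-evo}, and dotting with $\mycurve_s$ produces
\[ L L' = \mycurve_s\cdot\myIntTerm_s + \lambda\, \mycurve_{ss}\cdot\mycurve_s + \lambda_s L^2, \]
where the middle term vanishes by the parameterisation constraint $\mycurve_s\cdot\mycurve_{ss}=0$. Solving for $\lambda_s$, integrating over $\mathbb T$, and using periodicity of $\lambda$ to eliminate $\int_\mathbb T \lambda_s$ yields the expression \eqref{eqn-l-prime-by-l} for $L'/L$. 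A second integration from $0$ to $s$ using the boundary condition $\lambda(0,t)=0$, together with substitution of \eqref{eqn-l-prime-by-l}, produces the explicit formula \eqref{eqn-defn-lambda}.

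For the ``if'' direction I would define $\lambda$ by \eqref{eqn-defn-lambda} (so that $\lambda(0,t)=0$ is automatic) and assume that $\mycurve$ solves \eqref{eqn-equiv-SF-system}. Differentiating in $s$ and dotting with $\mycurve_s$ gives
\[ \tfrac12 \partial_t |\mycurve_s|^2 = \mycurve_s\cdot \myIntTerm_s + \tfrac{\lambda}{2} \partial_s |\mycurve_s|^2 + \lambda_s |\mycurve_s|^2. \]
The structure of \eqref{eqn-defn-lambda} forces the quantity $\mu(t) := \lambda_s(s,t) + (\mycurve_s\cdot\myIntTerm_s)/|\mycurve_s|^2$ to depend only on $t$, since differentiating \eqref{eqn-defn-lambda} in $s$ produces a term cancelling $\mycurve_s\cdot\myIntTerm_s/|\mycurve_s|^2$ up to the $s$-independent constant computed via \eqref{eqn-l-prime-by-l}. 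Setting $F := |\mycurve_s|^2$ therefore reduces the equation to the first-order transport equation $F_t = \tfrac\lambda2 F_s + \mu(t) F$. Applying the method of characteristics yields a representation $F(s,t) = F_0(\xi(s,t)) \exp\!\left(\int_0^t \mu(\tau)\,d\tau\right)$ for an appropriate characteristic flow $\xi$. The hypothesis $\mycurve_{0,s}\cdot \mycurve_{0,ss}=0$ is precisely $\partial_s F_0 \equiv 0$, whence $F$ is $s$-independent for all $t$, giving $\mycurve_s\cdot\mycurve_{ss}\equiv 0$.

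The main obstacle, as I see it, is not in the substance but in cleanly verifying that $\mu(t)$ is genuinely $s$-independent: this must be extracted from the explicit form of $\lambda$ in \eqref{eqn-defn-lambda}, and is the one nontrivial check that glues the two directions together. A secondary matter is well-posedness of the characteristic ODE $\dot\xi(s,t) = \lambda(\xi(s,t),t)$; this is handled by the assumed smoothness of $\mycurve$, which makes $\lambda$ a Lipschitz function of $s$ so that the characteristics exist and depend smoothly on their initial data throughout the time interval on which $\mycurve$ is defined.
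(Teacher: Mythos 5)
Your proposal is correct and follows essentially the same route as the paper: the forward direction by differentiating \eqref{eqn-equiv-SF-system-short} in $s$, dotting with $\mycurve_s$, and integrating first over $\mathbb T$ and then from $0$ to $s$; the converse by observing from the explicit form of \eqref{eqn-defn-lambda} that $\mu(t)=\lambda_s+\mycurve_s\cdot\myIntTerm_s/|\mycurve_s|^2$ is $s$-independent and then solving the resulting transport equation for $F=|\mycurve_s|^2$ by characteristics. The one check you flag as the "main obstacle" is in fact immediate, since \eqref{eqn-defn-lambda} gives $\lambda_s$ as an $s$-independent constant minus exactly $\mycurve_s\cdot\myIntTerm_s/|\mycurve_s|^2$.
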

 
\section{Computing derivatives of terms  }\label{ch5section-computing-ds}
When computing derivatives in $s$, it is useful to use the periodicity in $s$ to rewrite the periodic integrands 
in terms of $ s_*+s$ and $s$ instead of $s_*$ and $s$. This is so that the derivatives of difference quotients like $\dq{a}{b}{\beta}{s+s_*}{s}$ retain their difference quotient structure. 

For simplicity we introduce notation for a finite difference,
\[ \fd a(s,s_*):=a(s+s_*) - a(s). \]

The relevant quantities (suppressing the time variable) become
\begin{align}
     \myIntTerm(s)&
		=-\int_{\mathbb T} \frac{\fd\mycurve_s(s,s_*)}{|\fd\mycurve(s,s_*)|^\beta} \dd{s_*}, \label{defn-I}\\
     \lambda(s) &= s  \int_{\mathbb T} \frac{\mycurve_{s}(s_1)}{L^2}
     \cdot
       \myIntTerm_s(s_1) \dd{s_1 } 
 - \int_0^s \frac{\mycurve_{s}(s_1)}{L^2}
     \cdot
      \myIntTerm_s(s_1) \dd{s_1 } . \label{defn-lambda} \end{align}

Note that $\zeta $ is smooth if $\mycurve$ is,  by differentiating under the integral.
Therefore, the first two derivatives of $\myIntTerm$ are (writing $\fd h$ for $\fd h(s,s_*)$) 
\newcommand{\Df}{\fd\mycurve_s}
\newcommand{\Dfprime}{\fd\mycurve_{ss}}
\newcommand{\Dfpprime}{\fd\mycurve_{sss}}
\newcommand{\Dfppprime}{\fd\mycurve_{ssss}}
 
\newcommand{\Dg}{\fd\mycurve}
\newcommand{\Dgprime}{(\fd\mycurve\cdot\fd\mycurve_s)}

\newcommand{\DgpprimeA}{(|\fd\mycurve_s|^2 + \fd\mycurve\cdot\fd\mycurve_{ss})   }
\newcommand{\DgpprimeB}{(\fd\mycurve\cdot\fd \mycurve_s)^2}

\begin{align}
\myIntTerm_s(s)&=
-\int_{\mathbb T} 
\frac{\fd\mycurve_{ss}}{|\fd\mycurve|^{\beta}}
    -\beta\frac{\fd \mycurve_s(\fd \mycurve\cdot \fd\mycurve_{s})}{|\fd \mycurve|^{\beta+2}}\dd{s_*}, \label{eqn-1st-zeta-deriv} \\
\myIntTerm_{ss}(s) &= 
-\int_{\mathbb T} \Bigg(
\frac{\Dfpprime}{|\Dg|^\beta} 
- 2\beta\frac{\Dfprime \Dgprime}{|\Dg|^{\beta+2}} \notag\\ 
&\quad -\beta \frac{\Df \DgpprimeA}{|\Dg|^{\beta+2}}\notag\\
&\quad +\beta(\beta+2)\frac{\Df\DgpprimeB}{|\Dg|^{\beta+4}} \bigg)\dd{s_*},   \label{eqn-2nd-zeta-deriv}
\end{align}

which yields 
\begin{align}
\lambda_s(s) &=  
     \int_{\mathbb T} \frac{\mycurve_{s}(s_1)}{L^2}
     \cdot
      \myIntTerm_s(s_1) \dd{s_1 } 
  -\frac{\mycurve_{s}(s)}{L^2}
     \cdot
      \myIntTerm_s(s), \label{eqn-lambda-deriv} \\ 
\lambda_{ss} (s) &=  -\frac{ \mycurve_s(s)\cdot \myIntTerm_{ss}(s) }{L^2} - \frac{ \mycurve_{ss}(s)\cdot \myIntTerm_s(s)}{L^2} . \label{eqn-lambda-dderiv}
\end{align}
We have computed all the terms on the right hand side of  the evolution equations
\begin{align}
\label{eqn-evo1} \mycurve_t &= \myIntTerm + \lambda \mycurve_s,\\
\label{eqn-evo2}\mycurve_{st} &= \myIntTerm_s + \lambda_s \mycurve_s + \lambda \mycurve_{ss}, \\
\label{eqn-evo3}\mycurve_{sst} &= \myIntTerm_{ss} + \lambda_{ss} \mycurve_s + 2\lambda_s\mycurve_{ss} + \lambda \mycurve_{sss}. 
\end{align}

As mentioned earlier, we also need to assume that the initial curve $\mycurve_0$ does not self-intersect so that the integral $\myIntTerm(s,t)$ makes sense. In analogy with  \cite{gancedo2008existence},
let us introduce the arc-chord condition via the function $\Gamma=\Gamma(\mycurve)$ below. It acts as a quantitative control on the length of the curve (since $\Gamma(0,0)=\frac1L$) and self-intersection, and is a slight variant of the function $F$ introduced in \cite{gancedo2008existence}.

\begin{defn}[Arc-chord condition] \label{defn-ac-cond} We say that a curve $\mycurve$ satisfies the arc chord condition if the function $ \Gamma(z) : \mathbb T^2 \to \mathbb R$ \label{defn-gamma} defined by
\begin{align}
 \Gamma (\mycurve)(s,s_*) :=\begin{cases}
\dfrac{|\sin(\pi s_*)|}{\pi |\mycurve(s+s_*)-\mycurve(s)|} & s_*\neq 0,\\ \dfrac{1}{|\mycurve_s(s)|} &s_* = 0,    \label{eqn-defn-gamma}
\end{cases}    
\end{align}
belongs to $L^\infty(\mathbb T^2)$.
\end{defn}
\begin{lem}[Analyticity of $\Gamma $]
Suppose $\mycurve$ is an analytic curve with uniform speed parameterisation $s\in\mathbb T$, and $\Gamma(\mycurve)\in L^\infty(\mathbb T^2) $. Then $\mycurve$ doesn't self-intersect, and $\Gamma(\mycurve)$ is  analytic on $\mathbb T^2$. In addition, $\Gamma(\mycurve)$ is bounded away from $0$.
\end{lem}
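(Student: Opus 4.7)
The plan is to extract three consequences from the two hypotheses, in the order: non-self-intersection, real-analyticity on all of $\mathbb T^2$ (including the diagonal $\{s_* = 0\}$ where both the numerator and the denominator of $\Gamma(\mycurve)$ vanish), and a strictly positive lower bound.

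For non-self-intersection I would argue by contradiction. If there exist $(s_0, s_0^*) \in \mathbb T^2$ with $s_0^* \not\equiv 0 \pmod 1$ and $\mycurve(s_0 + s_0^*) = \mycurve(s_0)$, then $|\sin(\pi s_*)|$ stays bounded below by a positive constant on some neighbourhood $U$ of $(s_0, s_0^*)$, while continuity of $\mycurve$ forces $|\mycurve(s+s_*) - \mycurve(s)| \to 0$ as $(s, s_*) \to (s_0, s_0^*)$. Hence $\Gamma(\mycurve)$ blows up on the positive-measure open set $U$, contradicting $\Gamma(\mycurve) \in L^\infty(\mathbb T^2)$. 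In particular, the denominator of $\Gamma$ does not vanish off the diagonal.

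For analyticity I would split $\mathbb T^2$ into a neighbourhood $V$ of the diagonal $\{s_* = 0\}$ and its complement. Off $V$, the scalar $|\mycurve(s+s_*) - \mycurve(s)|^2 = \fd\mycurve\cdot\fd\mycurve$ is a non-vanishing real-analytic function, so its positive square root is real-analytic; and on $\mathbb T \setminus \{0\}$ lifted to $(0,1)$ one has $|\sin(\pi s_*)| = \sin(\pi s_*)$, which is real-analytic. Their ratio is therefore real-analytic on the complement of $V$. On $V$ itself I would exploit the factorisations
\begin{align*}
\sin(\pi s_*) &= \pi s_*\,h(s_*), & h(s_*) &:= \int_0^1 \cos(\pi t s_*)\,\dd{t},\\
\mycurve(s+s_*) - \mycurve(s) &= s_*\,w(s,s_*), & w(s,s_*) &:= \int_0^1 \mycurve_s(s + t s_*)\,\dd{t},
\end{align*}
both of which are real-analytic and satisfy $h(0) = 1$ and $|w(s,0)| = |\mycurve_s(s)| = L > 0$ by the uniform-speed assumption. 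Shrinking $V$ so that $w$ does not vanish on $V$, the expression $|w| = \sqrt{w\cdot w}$ is real-analytic on $V$, and the two $|s_*|$ factors cancel cleanly to give
\[
\Gamma(\mycurve)(s, s_*) = \frac{h(s_*)}{|w(s, s_*)|} \quad\text{on } V,
\]
which is real-analytic and matches the prescribed value $1/|\mycurve_s(s)|$ at $s_* = 0$.

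The main obstacle — and the reason the lemma genuinely needs a proof — is precisely this cancellation near the diagonal: both $|\sin(\pi s_*)|$ and $|\mycurve(s+s_*) - \mycurve(s)|$ individually carry an $|s_*|$-type corner at $s_* = 0$ and are non-analytic there, so analyticity of their quotient is a cancellation phenomenon, and the factorisations are designed to expose the hidden analytic ratio \emph{before} the absolute value is taken. Once analyticity is in hand, $\Gamma(\mycurve)$ is a continuous, strictly positive function on the compact manifold $\mathbb T^2$, so it attains a strictly positive minimum, yielding the bound away from zero.
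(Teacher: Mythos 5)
Your proof is correct and rests on the same idea as the paper's: the simple zero of $\sin(\pi s_*)$ at $s_*=0$ cancels against that of $\fd\mycurve$, leaving an analytic non-vanishing vector whose Euclidean norm (and hence reciprocal, using the $L^\infty$ bound on $\Gamma$) is analytic. The paper states this in one line via the quotient $Z=\fd\mycurve/\mysin s_*$; your version merely spells out the off-diagonal case, the explicit integral factorisations near the diagonal, and the compactness argument for the positive lower bound.
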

\begin{proof}
Notice that $Z:=\frac{\mycurve(s+s_*) - \mycurve(s)}{\sin(\pi s_*)/\pi }$ is analytic and does not vanish. The Euclidean norm of $Z$ is therefore analytic, and the fact that this avoids zero since $\Gamma(\mycurve) \in L^\infty$  means that its reciprocal is also analytic.
\end{proof}
To simplify the presentation slightly, we define $ \mysin s := \sin(\pi s)/\pi. $

The goal is to apply a \CK{} type argument, as in \cite{fefferman2011analytic}. The contour dynamics equation involves a term of order $1+\beta$, which means that a direct application of Cauchy-Kowaleski is not an option. We will need to carefully write the evolution equations for
\[ \mycurve,\mycurve_s,\mycurve_{ss},\text{ and } \Gamma,   \]
in terms of analytic functions involving at most first order operators of those quantities plus one higher order term, that has a linear structure. To this end, we will use the following elementary version of the Fundamental Theorem of Calculus.
\begin{lem}
     If $z\in C^1$ is a curve, $s,s_*\in\mathbb T$, then if we define for curves $w$,
\begin{align}
    \FTC(w)(s,s_*) := \int_0^1 w(s+(1-\tau)s_*) \dd{\tau}, \label{eqn-FTC}      
\end{align}
then
\begin{align}
    \fd \mycurve(s,s_*) &= \FTC(\mycurve_s)(s,s_*)s_*.
\end{align}
\end{lem}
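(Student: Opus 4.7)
The plan is to reduce the identity to the classical fundamental theorem of calculus by a single change of variables (or equivalently, by recognising the integrand as a total derivative in $\tau$). The right-hand side $\FTC(\mycurve_s)(s,s_*)\,s_* = s_*\int_0^1 \mycurve_s(s+(1-\tau)s_*)\,\dd{\tau}$ is essentially an average of $\mycurve_s$ along the straight segment from $s$ to $s+s_*$, reweighted so that the integration variable lives in $[0,1]$ instead of $[0,s_*]$.

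First I would fix $s,s_*\in\mathbb T$ and define the auxiliary function $f:[0,1]\to\mathbb R^2$ by $f(\tau):=\mycurve\bigl(s+(1-\tau)s_*\bigr)$. Since $\mycurve\in C^1$, the chain rule gives $f'(\tau)=-s_*\,\mycurve_s\bigl(s+(1-\tau)s_*\bigr)$. Applying the one-dimensional fundamental theorem of calculus componentwise then yields
\begin{align}
f(0)-f(1) \;=\; -\int_0^1 f'(\tau)\,\dd{\tau} \;=\; s_*\int_0^1 \mycurve_s\bigl(s+(1-\tau)s_*\bigr)\,\dd{\tau}.
\end{align}
The left-hand side is exactly $\mycurve(s+s_*)-\mycurve(s)=\fd\mycurve(s,s_*)$, while the right-hand side is $\FTC(\mycurve_s)(s,s_*)\,s_*$ by the definition \eqref{eqn-FTC}, which proves the claim.

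There is no real obstacle here; the only minor point worth noting is that one need not worry about dividing by $s_*$ when $s_*=0$, since the identity is presented multiplicatively (both sides equal $0$ in that case), and the $C^1$ hypothesis is just enough to justify applying the classical fundamental theorem of calculus to each component of the vector-valued curve $\mycurve$.
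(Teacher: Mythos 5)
Your proof is correct and is exactly the argument the paper has in mind: the lemma is presented as an "elementary version of the Fundamental Theorem of Calculus," and the same FTC-plus-rescaling computation appears explicitly in the paper's expansion of $\Gamma$. Nothing to add.
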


It will be convenient to extend the definition of $\FTC$ to two-variable functions $F=F(s,s_*)$ by the formula
\begin{align} \FTC(F)(s,s_*) := \int_0^1 F(s,(1-\tau)s_*) \dd{\tau} \label{eqn-extnded-I0}.
\end{align}
This agrees with the previous definition \eqref{eqn-FTC} in the sense that if $ F(s,s_*) = z(s+s_*),$ then $\FTC (F)(s,s_*) = \FTC (z)(s,s_*)$. 
\begin{prop}[Expansion of $\Gamma$] The function $\Gamma : \mathbb T^2 \to \mathbb R$ as defined in \eqref{eqn-defn-gamma} satisfies the following first order expansion in $s_*$,
    \[ \Gamma(s,s_*,t) = \frac1{L(t)} + \FTC(\del_{s_*}\Gamma)(s,s_*)s_*, \]
    where explicitly,  if $s_*\neq 0$ (ignoring the $t$ variable),
    \[ \del_{s_*} \Gamma(s,s_*) = \begin{cases}
     -\Gamma(s,s_*)^3 \frac{\fd \mycurve}{\mysin s_*   }\cdot\left(\frac{ \mycurve_s(s+s_*)  \mysin s_*  - \fd\mycurve(s,s_*)  \cos(\pi s_*) }{(\mysin s_* )^2} \right) & s_* \neq 0, \\ 0 & s_* = 0,
 \end{cases}
   \]
and using \eqref{eqn-extnded-I0}, we have $ \FTC(\del_{s_*}\Gamma)(s,s_*) := \int_0^1 (\del_{s_*}\Gamma)(s,(1-\tau)s_*) \dd{\tau}.  $
\end{prop}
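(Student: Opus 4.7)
The plan is to establish the expansion in two steps: first, by applying the fundamental theorem of calculus in the variable $s_*$ to obtain the integral representation, and then by computing $\del_{s_*}\Gamma$ by direct differentiation to verify the explicit formula. The boundary value $\Gamma(s, 0) = 1/L(t)$ is immediate from the uniform-speed parameterisation \eqref{cancellation-from-parameterisation} and the definition \eqref{eqn-defn-gamma}. Since the preceding lemma guarantees $\Gamma$ is analytic (and hence smooth) on $\mathbb{T}^2$, the classical one-variable FTC yields $\Gamma(s, s_*) - 1/L(t) = \int_0^{s_*} \del_u \Gamma(s, u) \dd{u}$, and the substitution $u = (1-\tau)s_*$ rewrites this as $s_* \int_0^1 \del_{s_*}\Gamma(s, (1-\tau)s_*) \dd{\tau} = s_* \FTC(\del_{s_*}\Gamma)(s, s_*)$, in accordance with the extended definition \eqref{eqn-extnded-I0}.

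For the formula for $\del_{s_*}\Gamma$ itself, restrict first to $s_* \in (0, 1)$, where $\mysin s_* > 0$ and so $\Gamma = \mysin s_* / |\fd\mycurve|$. The quotient rule together with the identity $\del_{s_*}|\fd\mycurve| = \fd\mycurve \cdot \mycurve_s(s+s_*)/|\fd\mycurve|$ gives
\[
\del_{s_*}\Gamma = \frac{\cos(\pi s_*)|\fd\mycurve|^2 - \mysin s_* \, \fd\mycurve \cdot \mycurve_s(s+s_*)}{|\fd\mycurve|^3}.
\]
Multiplying numerator and denominator by $(\mysin s_*)^3$ and using $|\fd\mycurve|^2 = \fd\mycurve \cdot \fd\mycurve$ to extract a factor of $\fd\mycurve/\mysin s_*$ from the numerator recovers precisely the stated form. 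The boundary value $\del_{s_*}\Gamma(s, 0) = 0$ follows from analyticity of $\Gamma$ together with the uniform-speed condition: writing $\Gamma = 1/|Z|$ for the analytic vector $Z(s, s_*) := \fd\mycurve(s, s_*)/\mysin s_*$ (which equals $\mycurve_s(s)$ at $s_* = 0$, with $\del_{s_*} Z(s, 0) = \mycurve_{ss}(s)/2$ from Taylor expansion), one computes $\del_{s_*}|Z|(s, 0) = \mycurve_s \cdot \mycurve_{ss}/(2L) = 0$ by the uniform-speed identity $\mycurve_s \cdot \mycurve_{ss} = 0$, and hence $\del_{s_*}\Gamma(s, 0) = -\del_{s_*}|Z|(s, 0)/|Z(s, 0)|^2 = 0$.

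The hard part, insofar as there is one, is purely algebraic bookkeeping in recasting the quotient-rule expression into the factored form claimed in the proposition. The only conceptual subtlety is that the factor $(\mysin s_*)^2$ in the denominator of the explicit formula appears to introduce a singularity at $s_* = 0$, but this is removable---guaranteed abstractly by the analyticity of $\Gamma$ established in the preceding lemma, and confirmed by the explicit limit computation just sketched.
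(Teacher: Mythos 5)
Your proposal is correct and follows essentially the same route as the paper: the one-variable fundamental theorem of calculus with the substitution $u=(1-\tau)s_*$ to get the $\FTC$ form, direct differentiation of $\Gamma=|\mysin s_*|/|\fd\mycurve|$ (your quotient rule on $\mysin s_*/|\fd\mycurve|$ is algebraically equivalent to the paper's chain rule applied to $|\fd\mycurve/\mysin s_*|^{-1}$), and a Taylor expansion at $s_*=0$ invoking the uniform-speed identity $\mycurve_s\cdot\mycurve_{ss}=0$ to show the derivative vanishes there. Your packaging of the $s_*\to0$ limit through the analytic vector $Z=\fd\mycurve/\mysin s_*$ with $\del_{s_*}Z(s,0)=\mycurve_{ss}(s)/2$ is a slightly tidier rendering of the same expansion the paper carries out term by term.
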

\begin{proof}
    The formula for $\FTC(\del_{s_*} \Gamma)$ comes from applying the FTC to expand $\tilde \Gamma(s_*) := \Gamma(s,s_*)$ around $s_*=0$ for  fixed $s$,
    \[ \tilde\Gamma (s_*) =  \tilde\Gamma(0) + \int_0^{s_*} \tilde \Gamma'(s_*-\tilde \tau)\dd{\tilde  \tau} =  \tilde\Gamma(0) + \int_0^1 \tilde \Gamma'((1-\tau)s_*)\dd{\tau}s_*.\]
    Since $\tilde\Gamma(0) = \frac1{L}$, we just need to compute $\tilde\Gamma'$. We have
    \begin{align} \tilde\Gamma'(s_*) 
    &= \del_{s_*} \Gamma(s,s_*) = \del_{s_*}\left( \left|\frac{\fd\mycurve (s,s_*)}{\mysin s_*} \right|^{-1}  \right)
    \label{dgammaL1} \\
    &= -\left|\frac{\fd\mycurve (s,s_*)}{\mysin s_*}\right|^{-3}\frac{\fd\mycurve (s,s_*)}{\mysin s_*}\cdot \left(\del_{s_*} \frac{\fd\mycurve (s,s_*)}{\mysin s_*}\right)
    \label{dgammaL2}\\
    &= -\Gamma(s,s_*)^3  \frac{\fd\mycurve (s,s_*)}{\mysin s_*}\cdot \frac{(\mysin s_* )\mycurve_s (s+s_*)-\cos(\pi s_*)  \fd \mycurve(s,s_*)}{(\mysin s_*)^2} ,
    \label{dgammaL3}
    \end{align}
as claimed. For the behavior as $s_*\to0$, writing $\mysin s_* = s_* + O(s_*^3)$, and $\cos(\pi s_*) = 1-\pi^2 s_*^2 + O(s_*^4)$, 
\begin{align}
    &\del_{s*}\Gamma(s,s_*) \\
    &= -\Gamma(s,0)^3 \mycurve_{s}(s)\cdot  \frac{\mycurve_s(s+s_*)s_*-(1-\pi^2 s_*^2/2)\fd \mycurve(s,s_*)}{s_*^2 } + o(1)
    \label{dgammaL4} \\
    &= -\Gamma(s,0)^3 \mycurve_{s}(s)\cdot  \frac{\mycurve_s(s+s_*)s_*-(\mycurve_s(s)s_* + \mycurve_{ss}(s)s_*^2/2 + o(s_*^2) )}{s_*^2 } + o(1)
    \label{dgammaL5} \\
    &= -\Gamma(s,0)^3 \mycurve_s(s) \cdot 
    \frac{\fd{\mycurve_s }(s,s_*)s_* - \mycurve_{ss}(s)s_*^2/2 }{s_*^2} + o(1) \\
    &= -\Gamma(s,0)^3 \mycurve_s (s) \cdot \left(\mycurve_{ss}(s) - \frac {\mycurve_{ss}(s)}2 + o(1) \right )  + o(1)
    \label{dgammaL6}\\
    &= o(1) \label{dgamma7} , 
\end{align}
where in going from \eqref{dgammaL4} to \eqref{dgammaL5} we used a Taylor expansion of $\mycurve$ to rewrite $\fd\mycurve$, and at line \eqref{dgammaL6} we used $\mycurve_s \cdot\mycurve_{ss} = 0$.
\end{proof} An analogous calculation gives the following.
\begin{lem}[Expansion of $\Gamma^\beta$] \label{lem-gamma-beta}
\[\Gamma^\beta(s,s_*) = \frac1{L(t)^\beta} +  \FTC(\beta \Gamma^{\beta-1}\del_{s*}\Gamma )(s,s_*). \]
    
\end{lem}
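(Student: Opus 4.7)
The plan is to mimic exactly the proof of the expansion of $\Gamma$ from the preceding Proposition, applied instead to the scalar function $f(s_*):=\Gamma(s,s_*)^\beta$ with $s$ held fixed. Since $\Gamma$ is analytic on $\mathbb T^2$ and bounded away from $0$ by the Analyticity Lemma, the real power $\Gamma^\beta$ is a well-defined analytic (hence $C^1$) function of $s_*$, even though $\beta\in(1,2)$ is not an integer. Thus the chain rule applies and gives
\[ f'(s_*) = \del_{s_*}\Gamma(s,s_*)^\beta = \beta\,\Gamma(s,s_*)^{\beta-1}\,\del_{s_*}\Gamma(s,s_*). \]

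Next, I would apply the fundamental theorem of calculus to $f$ on $[0,s_*]$ and then substitute $\tilde\tau=(1-\tau)s_*$ to rewrite the integral in the $\FTC$ form \eqref{eqn-extnded-I0}, exactly as in the earlier proof:
\[ f(s_*) = f(0) + \int_0^{s_*} f'(\tilde\tau)\,\dd{\tilde\tau} = f(0) + s_* \int_0^1 f'((1-\tau)s_*)\,\dd{\tau}. \]
The boundary term is $f(0)=\Gamma(s,0)^\beta = L(t)^{-\beta}$ by \eqref{eqn-defn-gamma}, and the integral is exactly $\FTC(\beta \Gamma^{\beta-1}\del_{s_*}\Gamma)(s,s_*)$ by the extended definition of $\FTC$. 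This yields the claim.

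The only mildly delicate point is that $\del_{s_*}\Gamma$ was defined only piecewise at $s_*=0$ in the previous proposition, so to apply the FTC cleanly one should note that the computation in lines \eqref{dgammaL4}--\eqref{dgamma7} actually shows $\del_{s_*}\Gamma$ is continuous (in fact analytic) across $s_*=0$ with value $0$ there, so $f'$ is continuous on all of $\mathbb T$ and the FTC step is unambiguous. There is no real obstacle here: the result is essentially a one-line corollary of the previous proposition together with the positivity of $\Gamma$, and I expect the hardest part of writing it out to be simply presenting the chain-rule step in a way that makes the analogy with the scalar case $\beta=1$ transparent.
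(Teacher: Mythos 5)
Your proof is correct and is precisely the ``analogous calculation'' the paper intends: it offers no separate argument for this lemma, simply deferring to the FTC-plus-chain-rule computation of the preceding proposition, which is what you carry out (the positivity and analyticity of $\Gamma$ justifying the chain rule for the non-integer power $\beta$). Note that your derivation correctly produces the factor $s_*$ multiplying $\FTC(\beta\Gamma^{\beta-1}\del_{s_*}\Gamma)(s,s_*)$; this factor is missing from the lemma's displayed statement (a typo) but is present where the lemma is applied in \eqref{eqn-Iss}, so your version is the right one.
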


 In rewriting the evolution equations \eqref{eqn-evo1}, \eqref{eqn-evo2}, \eqref{eqn-evo3}, the following dot products appear:\begin{align}
    \fd\mycurve\cdot\fd\mycurve_s  
    &= \myIBA \cdot \myICA s_*^2, \label{eqn-dotprod1} \\
|\fd\mycurve_s|^2 &= |\myICA|^2  s_*^2, \label{eqn-dotprod2}\\ 
\fd\mycurve\cdot\fd\mycurve_{ss} &= \myIBA\cdot\myIDA s_*^2.   \label{eqn-dotprod3}
\end{align}

%
%
%
%

We will also need expressions for $\zeta, \zeta_s$ and $\zeta_{ss}$.
Starting from \eqref{defn-I}, \eqref{eqn-1st-zeta-deriv}, and \eqref{eqn-2nd-zeta-deriv} we replace the finite differences $\fd \del_s^k\mycurve $ with the integral terms $ \FTC(\del_s^{k+1} \mycurve)s_*$ via \eqref{eqn-dotprod1}, \eqref{eqn-dotprod2}, and \eqref{eqn-dotprod3}. Then we rewrite $|\fd \mycurve(s,s_*)|^{-1}=|\mysin s_*|^{-1} \Gamma(s,s_*)$. 
We obtain the following expressions (already involving 3rd derivatives of $z$)
\begin{align} \myIntTerm(s)
&=  -\int_{\mathbb T} \Gamma(s,s_*)^\beta \frac{\myICA s_*}{|\mysin s_*|^\beta } \dd{s_*}, 
\label{eqn-I} \\
\myIntTerm_s(s)
&= -\int_{\mathbb T} \Gamma(s,s_*)^\beta \frac{\myIDA s_*}  {|\mysin s_*|^\beta}  \dd{s_*}, 
\notag \\ 
& \quad +\beta  \int_{\mathbb T}\Gamma(s,s_*)^{\beta+2}\frac{\myICA(\myIBA\cdot \myICA)s_*^3}{|\mysin s_* |^{\beta+2}} \dd{s_*}.
\label{eqn-Is} 
\end{align}
For the highest derivative $\zeta_{ss}$,  we additionally use the expansion of $\Gamma^\beta$ in Lemma \ref{lem-gamma-beta} to isolate a convolution operator for the most singular term.
\begin{align}
\myIntTerm_{ss}(s) & = -\frac1{L^\beta} \int_{\mathbb T} \frac{\mycurve_{sss} (s+s_*) -  \mycurve_{sss}(s) }{|\mysin s_*|^\beta } \dd{s_*} 
\notag \\
&-\int_{\mathbb T} \FTC(\beta \Gamma^{\beta-1} \del_{s_*}\Gamma  )(s,s_*)s_*\frac{\mycurve_{sss} (s+s_*) -  \mycurve_{sss}(s) }{|\mysin s_*|^\beta } \dd{s_*} 
\notag 
\\&+ 2\beta\int_{\mathbb T} \Gamma(s,s_*)^{\beta+2} \frac{\myIDA (\myIBA\cdot \myICA)s_*^3}{|\mysin s_* |^{\beta+2}} \dd{s_* }\notag\\ 
&+ \beta\int_{\mathbb T} \Gamma(s,s_*)^{\beta+2} \frac{\myICA (|\myICA|^2+\myIBA \cdot \myIDA )s_*^3}{|\mysin s_* |^{\beta+2}} \dd{s_*} \notag\\
&-\beta(\beta+2)\int_{\mathbb T} \Gamma(s,s_*)^{\beta+4} \frac{ \FTC(\mycurve_{ss}) (\myIBA\cdot \myICA)^2 s_*^5 }{|\mysin s_*|^{\beta+4}} \dd{s_*} .
\label{eqn-Iss} 
\end{align}

To rewrite $\lambda$ we start by considering $z_s\cdot \zeta_s$ (see \eqref{defn-lambda}).
Using \eqref{eqn-Is} we find
\begin{align}
    \mycurve_s \cdot \myIntTerm_s 
    &= \int_{\mathbb T}-\Gamma(s,s_*)^\beta \frac{\myIDA\cdot \mycurve_s  s_*}  {|\mysin s_*|^\beta} \\
    &\qquad     +\beta  \Gamma(s,s_*)^{\beta+2}\frac{\myICA\cdot \mycurve_s(\myIBA\cdot \myICA)s_*^3}{|\mysin s_* |^{\beta+2}} \dd{s_*}.  
\end{align}
Notice that this already includes $z_{sss}$.  However, notice that 
\begin{align*}
 \myIDA \cdot \mycurve _s(s) s_* 
 &= \mycurve_{ss}(s+s_*) \cdot \mycurve_s(s) 
\\&=  \mycurve_{ss}(s+s_*)\cdot(\mycurve_s(s+s_*) - \myICA s_*)\\
&= -\mycurve_{ss}(s+s_*) \cdot \myICA s_*,
\end{align*}
so we can remove completely the dependence on the third derivative: \begin{align}
\mycurve_s \cdot \myIntTerm_s 
&
= \int_{\mathbb T}\Gamma(s,s_*)^\beta \frac{\myICA\cdot \mycurve_{ss}(s+s_*)  s_*}  {|\mysin s_*|^\beta} \\
&\qquad +\beta  \Gamma(s,s_*)^{\beta+2}\frac{\myICA\cdot \mycurve_s(\myIBA\cdot \myICA)s_*^3}{|\mysin s_* |^{\beta+2}} \dd{s_*}.
\label{zscdotzetas}      
\end{align}
Hence, we can write  $\lambda$ as (using $\frac1{L} = \Gamma(s,0) = \Gamma(0,0)$)
\begin{align}
&  \lambda(s) \\&= 
s  \int_{\mathbb T} \frac{\mycurve_{s}(s_1)}{L^2}
     \cdot
       \myIntTerm_s(s_1) \dd{s_1 } 
 - \int_0^s \frac{\mycurve_{s}(s_1)}{L^2}
     \cdot
      \myIntTerm_s(s_1) \dd{s_1 }      
      \notag \\
      &=  \frac {s}{L^2}\int_{\mathbb T}\int_{\mathbb T} \bigg(\Gamma(s_1,s_0)^\beta \frac{\myICA(s_1,s_0)\cdot \mycurve_{ss}(s_0+s_1)  s_0}  {|\mysin s_0|^\beta} +\beta  \Gamma(s_1,s_0)^{\beta+2}\times 
      \notag \\
      &\quad  \frac{\myICA(s_1,s_0)\cdot \mycurve_s(s_1)(\myIBA(s_1,s_0)\cdot \myICA(s_1,s_0))s_0^3}{|\mysin s_0 |^{\beta+2}} \bigg)\dd{s_0}\dd{s_1} 
      \notag \\  
      &\quad -\frac1{L^2} \int_{0}^s\int_{\mathbb T} \bigg( \Gamma(s_1,s_0)^\beta \frac{\myICA(s_1,s_0)\cdot \mycurve_{ss}(s_0+s_1)  s_0}  {|\mysin s_0|^\beta} 
      + \beta  \Gamma(s_1,s_0)^{\beta+2}\times  
      \notag \\
      &\quad\frac{\myICA(s_1,s_0)\cdot \mycurve_s(s_1)(\myIBA(s_1,s_0)\cdot \myICA(s_1,s_0))s_0^3}{|\mysin s_0 |^{\beta+2}} \bigg) \dd{s_0}\dd{s_1}  . \label{eqn-lambda-v2}
\end{align}
(Here  $\times$ denotes multiplication.) Similarly, we can rewrite \eqref{eqn-lambda-deriv}  for $\lambda_s$ as (using \eqref{zscdotzetas})
\begin{align}
    &\lambda_s(s)
    \\ &= \frac{1}{L^2}\int_{\mathbb T}\int_{\mathbb T}\bigg( \Gamma(s_1,s_0)^\beta \frac{\myICA(s_1,s_0)\cdot \mycurve_{ss}(s_0+s_1)  s_0}  {|\mysin s_0|^\beta} 
    +\beta  \Gamma(s_1,s_0)^{\beta+2}\times
    \notag \\
      &
      \\
      &\quad \frac{\myICA(s_1,s_0)\cdot \mycurve_s(s_1)(\myIBA(s_1,s_0)\cdot \myICA(s_1,s_0))s_0^3}{|\mysin s_0 |^{\beta+2}} \bigg) \dd{s_0}\dd{s_1}
    \notag \\
      &\quad-\frac1{L^2}\int_{\mathbb T}\bigg( \Gamma(s,s_*)^\beta \frac{\myICA(s,s_*)\cdot \mycurve_{ss}(s+s_*)  s_*}  {|\mysin s_*|^\beta} 
      +\beta  \Gamma(s,s_*)^{\beta+2} \times
      \notag\\
      &\quad \frac{\myICA(s,s_*)\cdot \mycurve_s(s)(\myIBA(s,s_*)\cdot \myICA(s,s_*))s_*^3}{|\mysin s_* |^{\beta+2}}\bigg) \dd{s_*}.\label{eqn-lambda-deriv-v2}
\end{align}
Finally for $\lambda_{ss}$ recall that 
\begin{align*}
\lambda_{ss} (s) &= - \frac1{L^2} ( \myIntTerm_{ss}\cdot \mycurve_s   + \myIntTerm_s \cdot \mycurve_{ss})  .   
\end{align*}
 $\myIntTerm_{ss}$ involves an operator of order higher than 3 on $z$. However we have $\zeta_{ss} \cdot z_s$; using the identities (which follow from $\mycurve_s \cdot \mycurve_{ss} =  0$),
\begin{align*}
    \mycurve_{sss}(s)\cdot \mycurve_{s}(s) 
    &= - |\mycurve_{ss}(s)|^2,\\    
    \mycurve_{sss}(s+s_*)\cdot \mycurve_s(s) 
    &=  - |\mycurve_{ss}(s+s_*)|^2 -\mycurve_{sss}(s+s_*) \cdot \myICA(s,s_*) s_* , 
  \end{align*}
we can rewrite 
\begin{align*} 
&\fd \mycurve_{sss}(s,s_*) \cdot \mycurve_s(s) \\
&= - \fd |\mycurve_{ss}|^2(s,s_*) -\mycurve_{sss}(s+s_*)\cdot\myICA(s,s_*) s_* \\
&= \big( - 2 \FTC(\mycurve_{sss}\cdot \mycurve_{ss})(s,s_*) - \mycurve_{sss}(s+s_*) \cdot \myICA(s,s_*) \big )s_*. 
\end{align*}
Therefore, $\lambda_{ss}$ depends  on $\Gamma$ and the first three derivatives of $\mycurve$. For reference, the full expansion of $\lambda_{ss}$ is as follows, which comes from a similar derivation to that of \eqref{eqn-Iss} but the cancellations above are used to regularise the integral, in place of Lemma \ref{lem-gamma-beta}.
\begin{align}
\lambda_{ss}(s) & = \frac1{L^2} \int_{\mathbb T}  
\Gamma(s,s_*)^{\beta} \frac{(-2\FTC(\mycurve_{sss}\cdot \mycurve_{ss}) - \mycurve_{sss}(s+s_*) \cdot \myICA )s_*}{|\mysin s_*|^\beta } 
\\
&\quad  - 2\beta \Gamma(s,s_*)^{\beta+2} \frac{\myIDA\cdot\mycurve_{s} (\myIBA\cdot \myICA)s_*^3}{|\mysin s_* |^{\beta+2}} 
\notag\\ 
&\quad - \beta \Gamma(s,s_*)^{\beta+2} \frac{\myICA\cdot\mycurve_{s} (|\myICA|^2+\myIBA \cdot \myIDA )s_*^3}{|\mysin s_* |^{\beta+2}} 
\notag\\
&\quad +\beta(\beta+2) \Gamma(s,s_*)^{\beta+4} \frac{\myICA \  \cdot\mycurve_{s} (\myIBA\cdot \myICA)^2 s_*^5 }{|\mysin s_*|^{\beta+4}} 
\notag\\
&\quad + \Gamma(s,s_*)^\beta \frac{\myIDA\cdot \mycurve_{ss} s_*}  {|\mysin s_*|^\beta} 
\notag\\
&\quad -\beta  \Gamma(s,s_*)^{\beta+2}\frac{\myICA\cdot\mycurve_{ss} (\myIBA\cdot \myICA)s_*^3}{|\mysin s_* |^{\beta+2}} \dd{s_*}.
\label{eqn-lambda-dderiv-v2}
\end{align}
\section{Rewriting the evolution equations}
\label{ch5section-rewriteEqn}
We will consider $z$, $z_s$ and $z_ss$ as independent. We define
 \[ f(s,t)=\mycurve(s,t),\quad g(s,t) =\mycurve_s(s,t),\quad h(s,t) =\mycurve_{ss} (s,t).  \]
\subsection{Evolution of $f$}
Rewriting 
\eqref{eqn-evo1} using \eqref{eqn-I} and \eqref{eqn-lambda-v2}, we obtain
\begin{align}
& f_t(s)
\\ &= -\int_{\mathbb T} \Gamma(s,s_*)^\beta \frac{\FTC(h)(s,s_*)  s_*}{|\mysin s_*|^\beta } \dd{s_*} \notag \\
  &+ g(s) \Bigg( s \frac1{L^2} \int_{\mathbb T}\int_{\mathbb T} \Gamma(s_1,s_0)^\beta \frac{\FTC(h)(s_1,s_0)\cdot h(s_0+s_1)  s_0}  {|\mysin s_0|^\beta} \notag \\
      &+\beta  \Gamma(s_1,s_0)^{\beta+2}
			\frac{\FTC(h)(s_1,s_0)\cdot g(s_1)(\FTC(g)(s_1,s_0)\cdot \FTC(h)(s_1,s_0))s_0^3}{|\mysin s_0 |^{\beta+2}} \dd{s_0}\dd{s_1} 
      \notag \\  
      &-\frac1{L^2} \int_{0}^s\int_{\mathbb T} \Gamma(s_1,s_0)^\beta \frac{\FTC(h)(s_1,s_0)\cdot h(s_0+ s_1)  s_0}  {|\mysin s_0|^\beta} \notag \\
      &+\beta  \Gamma(s_1,s_0)^{\beta+2}
			\frac{\FTC(h)(s_1,s_0)\cdot g(s_1)(\FTC(g)(s_1,s_0)\cdot \FTC(h)(s_1,s_0))s_0^3}{|\mysin s_0 |^{\beta+2}} \dd{s_0}\dd{s_1}   \Bigg ).     \label{eqn-f}
\end{align}

\subsection{Evolution of $g$}
Rewriting  $\mycurve_{st} = \myIntTerm_s + \lambda \mycurve_{ss} + \lambda_s \mycurve_s$ \eqref{eqn-evo2} using \eqref{eqn-Is}, \eqref{eqn-lambda-v2}, and \eqref{eqn-lambda-deriv-v2}, we obtain
\begin{align}
g_t(s) &= -\int_{\mathbb T} \Gamma(s,s_*)^\beta \frac{\FTC(h_s) s_*}  {|\mysin s_*|^\beta}  \dd{s_*} \notag \\ 
&  +\beta  \int_{\mathbb T}\Gamma(s,s_*)^{\beta+2}\frac{\FTC(h)(\FTC(g)\cdot \FTC(h))s_*^3}{|\mysin s_* |^{\beta+2}} \dd{s_*}\notag \\
&+ h(s) \Bigg( s \frac1{L^2} \int_{\mathbb T}\int_{\mathbb T} \Gamma(s_1,s_0)^\beta \frac{\FTC(h)(s_1,s_0)\cdot h(s_0+s_1)  s_0}  {|\mysin s_0|^\beta} \notag \\
      &+\beta  \Gamma(s_1,s_0)^{\beta+2}
      \\ & \times \frac{\FTC(h)(s_1,s_0)\cdot g(s_1)(\FTC(g)(s_1,s_0)\cdot \FTC(h)(s_1,s_0))s_0^3}{|\mysin s_0 |^{\beta+2}} \dd{s_0}\dd{s_1} 
      \notag \\
        &-\frac1{L^2} \int_{0}^s\int_{\mathbb T} \Gamma(s_1,s_0)^\beta \frac{\FTC(h)(s_1,s_0)\cdot h(s_0+s_1)  s_0}  {|\mysin s_0|^\beta} \notag \\
      &+\beta  \Gamma(s_1,s_0)^{\beta+2}
      \\ & \times \frac{\FTC(h)(s_1,s_0)\cdot g(s_1)(\FTC(g)(s_1,s_0)\cdot \FTC(h)(s_1,s_0))s_0^3}{|\mysin s_0 |^{\beta+2}} \dd{s_0}\dd{s_1}   \Bigg ) \notag\\
      &+ g(s) \Bigg( \frac1{L^2} \int_{\mathbb T}\int_{\mathbb T} \Gamma(s_1,s_0)^\beta \frac{\FTC(h)(s_1,s_0)\cdot h(s_0+s_1)  s_0}  {|\mysin s_0|^\beta} \notag \\
      &+\beta  \Gamma(s_1,s_0)^{\beta+2} \\ 
      & \times \frac{\FTC(h)(s_1,s_0)\cdot g(s_1)(\FTC(g)(s_1,s_0)\cdot \FTC(h)(s_1,s_0))s_0^3}{|\mysin s_0 |^{\beta+2}} \dd{s_0}\dd{s_1}\notag   \\ 
      &-\frac1{L^2} \int_{\mathbb T} \Gamma(s,s_*)^\beta \frac{\FTC(h)(s,s_*)\cdot h(s+s_*)  s_*}  {|\mysin s_*|^\beta} \notag \\
      &+\beta  \Gamma(s,s_*)^{\beta+2}
			\frac{\FTC(h)(s,s_*)\cdot g(s)(\FTC(g)(s,s_*)\cdot \FTC(h)(s,s_*))s_*^3}{|\mysin s_* |^{\beta+2}} \dd{s_*}\Bigg) \label{eqn-g}.
 \end{align}

\subsection{Evolution of $h$}
Rewriting $\mycurve_{sst} = \myIntTerm_{ss} + \lambda_{ss} \mycurve_s + 2\lambda_{s} \mycurve_{ss} + \lambda \mycurve_{sss} $ \eqref{eqn-evo3} using \eqref{eqn-Iss}, \eqref{eqn-lambda-v2}, \eqref{eqn-lambda-deriv-v2}, and \eqref{eqn-lambda-dderiv-v2}, we obtain
\begin{align}
    h_t & =\frac{-1}{L^\beta} \int_{\mathbb T} \frac{h_s (s+s_*) -  h_s(s) }{|\mysin s_*|^\beta } \dd{s_*} \notag \\
&-\int_{\mathbb T} \FTC(\beta \Gamma^{\beta-1} \del_{s_*}\Gamma  )(s,s_*)s_*\frac{h_s (s+s_*) -  h_s(s) }{|\mysin s_*|^\beta } \dd{s_*} \notag  \\
&+ 2\beta\int_{\mathbb T} \Gamma(s,s_*)^{\beta+2} \frac{\FTC(h_s) (\FTC(g)\cdot \FTC(h))s_*^3}{|\mysin s_* |^{\beta+2}} \dd{s_* }\notag\\ 
&+ \beta\int_{\mathbb T} \Gamma(s,s_*)^{\beta+2} \frac{\FTC(h) (|\FTC(h)|^2+\FTC(g) \cdot \FTC(h_s) )s_*^3}{|\mysin s_* |^{\beta+2}} \dd{s_*} \notag\\
&-\beta(\beta+2)\int_{\mathbb T} \Gamma(s,s_*)^{\beta+4} \frac{\myI_{2,0 } (\FTC(g)\cdot \FTC(h))^2 s_*^5 }{|\mysin s_*|^{\beta+4}} \dd{s_*} 
\notag \\
& + g(s)\Bigg( \frac1{L^2}  \int_{\mathbb T}  \Gamma(s,s_*)^{\beta} \frac{(-2\FTC(h_s\cdot h) - h_s(s+s_*) \cdot \FTC(h) )s_*}{|\mysin s_*|^\beta } \notag \\
& - 2\beta \Gamma(s,s_*)^{\beta+2} \frac{\FTC(h)\cdot g (\FTC(g)\cdot \FTC(h))s_*^3}{|\mysin s_* |^{\beta+2}} \notag\\ 
&- \beta \Gamma(s,s_*)^{\beta+2} \frac{\FTC(h)\cdot g (|\FTC(h)|^2+\FTC(g) \cdot \FTC(h_s) )s_*^3}{|\mysin s_* |^{\beta+2}} \notag\\
&+\beta(\beta+2) \Gamma(s,s_*)^{\beta+4} \frac{\myI_{2,0 }\cdot g (\FTC(g)\cdot \FTC(h))^2 s_*^5 }{|\mysin s_*|^{\beta+4}} \notag \\
&+ \Gamma(s,s_*)^\beta \frac{\FTC(h_s)\cdot h s_*}  {|\mysin s_*|^\beta}   -\beta  \Gamma(s,s_*)^{\beta+2}\frac{\FTC(h)\cdot h (\FTC(g)\cdot \FTC(h))s_*^3}{|\mysin s_* |^{\beta+2}} \dd{s_*}
 \Bigg) \notag \\ 
 &+ 2h(s)\Bigg( \frac1{L^2} \int_{\mathbb T}\int_{\mathbb T} \Gamma(s_1,s_0)^\beta \frac{ \FTC(h)(s_1,s_0)\cdot h(s_0+s_1)  s_0}  {|\mysin s_0|^\beta} \notag \\
      &-\beta  \Gamma(s_1,s_0)^{\beta+2}
\\
& \times \frac{\FTC(h)(s_1,s_0)\cdot g(s_1)(\FTC(g)(s_1,s_0)\cdot \FTC(h)(s_1,s_0))s_0^3}{|\mysin s_0 |^{\beta+2}} \dd{s_0}\dd{s_1}\notag     \\  
      &-\frac1{L^2} \int_{\mathbb T} \Gamma(s,s_*)^\beta \frac{\FTC(h)(s,s_*)\cdot h(s+s_*)  s_*}  {|\mysin s_*|^\beta}\notag  \\
      &-\beta  \Gamma(s,s_*)^{\beta+2}\frac{\FTC(h)(s,s_*)\cdot g(s)(\FTC(g)(s,s_*)\cdot \FTC(h)(s,s_*))s_*^3}{|\mysin s_* |^{\beta+2}} \dd{s_*}
 \Bigg) \notag \\
 &+ h_s(s) \Bigg( s \frac1{L^2} \int_{\mathbb T}\int_{\mathbb T} \Gamma(s_1,s_0)^\beta \frac{\FTC(h)(s_1,s_0)\cdot h(s_0+s_1)  s_0}  {|\mysin s_0|^\beta} \notag \\
        &-\beta  \Gamma(s_1,s_0)^{\beta+2}
\\
& \times \frac{\FTC(h)(s_1,s_0)\cdot g(s_1)(\FTC(g)(s_1,s_0)\cdot \FTC(h)(s_1,s_0))s_0^3}{|\mysin s_0 |^{\beta+2}} \dd{s_0}\dd{s_1} \notag \\ 
       &-\frac1{L^2} \int_{0}^s\int_{\mathbb T} \Gamma(s_1,s_0)^\beta \frac{\FTC(h)(s_1,s_0)\cdot h(s_0+s_1)  s_0}  {|\mysin s_0|^\beta} \notag \\
      &-\beta  \Gamma(s_1,s_0)^{\beta+2}
\\
& \times \frac{\FTC(h)(s_1,s_0)\cdot g(s_1)(\FTC(g)(s_1,s_0)\cdot \FTC(h)(s_1,s_0))s_0^3}{|\mysin s_0 |^{\beta+2}} \dd{s_0}\dd{s_1}  
 \Bigg).\label{eqn-h}
 \end{align}
\subsection{Evolution of $\Gamma$}  The function $\Gamma$ defined in \eqref{eqn-defn-gamma} satisfies the evolution equation
\begin{align}
    \Gamma_t = \Gamma^3 \frac{\fd\mycurve}{|\mysin s_*|}\cdot \frac{\fd\mycurve_t}{|\mysin s_*|} = \frac{\Gamma^3 s_*^2}{(\mysin s_*)^2} \FTC(g) \cdot \FTC(g_t) . \label{eqn-gamma}
\end{align}  
\subsection{Summary of dependencies}\label{subsection-summary-of-dependencies}
We now define the operators $E_1$, $E_2$, $E_3,$ and $E_4$ using the right-hand sides of the above  equations. That is, $E_1$,  $E_2$, and $E_4$  are defined as the right-hand sides of \eqref{eqn-f}, \eqref{eqn-g} and \eqref{eqn-gamma} respectively. For $E_3$ in \eqref{eqn-h} we isolate the most singular term, involving a convolution, and use $g_t=E_2[g,h,h_s,\Gamma]$ to rewrite the term $\FTC(g_t)$. This gives 
\begin{align}
    f_t &= E_1[g,h,\Gamma]\label{eqn-T1}, \\
    g_t &= E_2[g,h,h_s,\Gamma]\label{eqn-T2}, \\
    h_t &= \frac{-1}{L^\beta}  \int_{\mathbb T} \frac{h_s(s+s_*) - h_s(s_*)}{|\mysin s_*|^\beta } \dd{s_*} + E_3[g,h,h_s,\Gamma]\label{eqn-T3},  \\ 
    \Gamma_t &= E_4[\Gamma,g,F_2[g,h,h_s\Gamma]]. \label{eqn-T4}
\end{align}
The convolution term in \eqref{eqn-T3} is 
\[\mathcal H_\beta (h) :=- \int_{\mathbb T} \frac{h_s(s+s_*) - h_s(s_*)}{|\mysin s_*|^\beta } \dd{s_*}.\]
%

\begin{lem}(Skew-symmetry)
The symbol of $\mathcal H_{\beta}$ is purely imaginary. \label{lem-skew-sym}
\end{lem}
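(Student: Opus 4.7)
The plan is to compute the Fourier symbol of $\mathcal H_\beta$ directly by testing against the characters $e_k(s) := e^{2\pi i k s}$, $k\in\mathbb Z$, and then to show that this symbol is purely imaginary via an elementary parity argument on the defining integral. Since $\mathcal H_\beta$ is translation invariant in $s$, each $e_k$ is an eigenfunction, and (reading $h_s(s_*)$ in the definition as the $h_s(s)$ that appears in the derivation of \eqref{eqn-Iss}, which is what makes the operator well-defined on functions not identically $0$ at $s_*=0$) substituting $h=e_k$ gives $(e_k)_s = 2\pi i k\, e_k$ and
\[ (e_k)_s(s+s_*) - (e_k)_s(s) = 2\pi i k\, e_k(s)\,(e^{2\pi i k s_*} - 1), \]
so the symbol is
\[ m(k) = -2\pi i k \int_{\mathbb T} \frac{e^{2\pi i k s_*} - 1}{|\mysin s_*|^\beta}\,\dd{s_*}. \]
Near $s_*=0$, $e^{2\pi i k s_*}-1 = O(s_*)$ while $|\mysin s_*|^\beta \sim |s_*|^\beta$, so the integrand is at worst $O(|s_*|^{1-\beta})$, which is integrable for $\beta<2$.

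The second step is to split this integral into real and imaginary parts on the symmetric fundamental domain $[-\tfrac12,\tfrac12]$:
\[ \int_{\mathbb T} \frac{e^{2\pi i k s_*} - 1}{|\mysin s_*|^\beta}\,\dd{s_*} = \int_{-1/2}^{1/2} \frac{\cos(2\pi k s_*) - 1}{|\mysin s_*|^\beta}\,\dd{s_*} + i\int_{-1/2}^{1/2} \frac{\sin(2\pi k s_*)}{|\mysin s_*|^\beta}\,\dd{s_*}. \]
The denominator $|\mysin s_*|^\beta = |\sin(\pi s_*)/\pi|^\beta$ is even in $s_*$ while $\sin(2\pi k s_*)$ is odd, so the imaginary part of the integral vanishes by antisymmetry. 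Hence
\[ m(k) = -2\pi i k \cdot R(k), \qquad R(k) := \int_{-1/2}^{1/2} \frac{\cos(2\pi k s_*)-1}{|\mysin s_*|^\beta}\,\dd{s_*} \in \mathbb R, \]
which is purely imaginary for every $k\in\mathbb Z$, establishing the claim.

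No serious obstacle is anticipated; the only minor point of care is ensuring the integral is understood consistently (absolutely convergent on the symmetric domain) so that the odd–even splitting is legitimate, and this is guaranteed by the range $\beta\in(1,2)$ under consideration. The sign of $R(k)$ (which will be negative since $\cos-1\le 0$ and $|\mysin s_*|^\beta>0$) is not needed for the statement, but it is worth recording since it means $m(k)$ has the same sign as $ik$, i.e.\ $\mathcal H_\beta$ is (up to the scalar $L^{-\beta}$ appearing in \eqref{eqn-T3}) a negative-definite operator of order $\beta$ analogous to $-\Lambda^\beta\partial_s$; this is the crucial dissipative-like structure that powers the abstract \CK{} application in the next section.
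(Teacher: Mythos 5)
Your proof is correct and is essentially the paper's own argument: both identify the symbol as $2\pi \text{i} k$ times the (renormalised) Fourier transform of the even, real kernel $|\mysin s_*|^{-\beta}$, which is real by exactly the parity cancellation you carry out explicitly on the characters $e^{2\pi \text{i} k s}$. The only difference is presentational — the paper invokes the general fact that the Fourier transform of an even, real (renormalised) distribution is real and even, while you verify it by direct computation; your added observations on integrability for $\beta<2$ and on the sign of the real factor are correct but not needed for the statement.
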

\begin{proof}
$-\mathcal H_\beta$ is  the convolution of $h_s$ with the renormalised distribution of $|\sin s|^{-\beta}$,
\[ \langle \mathcal R_{|\mysin s|^{-\beta}},\phi \rangle := \int_{\mathbb T} \frac{\phi(s)-\phi(0)}{|\mysin s|^\beta} \dd{s}. \]   
As $|\mysin s|^{-\beta}$ is even and real valued, so is the Fourier transform $F(k) =\mathcal F (\mathcal R_{|\mysin s|^{-\beta}})(k)$. Hence, the symbol of $\mathcal H_\beta$ is $ \text i  2\pi  kF(k)$, which is purely imaginary.
\end{proof}
Any operator $\text im(\del_s)$ with a purely imaginary symbol $\text im(k)$ is  skew-symmetric and so $(\text  im(\del_s)h,h)_{L^2} = 0$. We have $(\text im(\del_s)g,h)_{L^2(\mathbb T)} = (\text im(k)\hat g(k),\hat h(k))_{\ell ^2(\mathbb Z)} = - (\hat g(k), \text  im(k)\hat h(k))_{\ell ^2(\mathbb Z)}$. More importantly the operator
$\del_t -\text im(\del_s)  $ is boundedly invertible on $L^2$-based Sobolev spaces , with a solution defined via its Fourier coefficients,
\begin{align}
 \del_t f - \text i m(\del_s) f = g \iff \hat f(k) = \text e^{-\text i m(k)t} \hat f_0(k) + \int_0^t \hat g(k) \text e^{\text i m(k)(\tilde t -t) } \dd{\tilde t}.  
\end{align}
For the time dependent operator $\del_t -  L(t)^{-\beta} m(\del_s)  $, we can first perform the time rescaling $\del_{t_0} f(s,t(t_0)) = L(t)^{\beta} \del_t f$ with $t(t_0)= \int_0^{t_0} L(\tau)^{\beta} \dd{\tau}$. In these coordinates, the equation is $\partial_{t_0} f - \text i m(\partial_s) f = g$. Writing $t_0(t)$ for the inverse of $t(t_0)$, applying the above formula gives
\begin{align}
     & \del_t f - \text i  L(t)^{-\beta} m(\del_s) f = g 
     \\ \iff \hat f(k,t) &= \text e^{-\text im(k)t_0(t)} \hat f_0(k) + \int_0^{t_0(t)}  \hat g(k,t(\tilde t_0)) \text e^{\text i m(k)(\tilde t_0 -t_0(t)) } \dd{\tilde t_0}   .\end{align}
A change of variables $\tilde t_0 = t_0(\tilde t)$ gives
\begin{align}
 \hat f(k,t) &= \text e^{-\text im(k)t_0(t)} \hat f_0(k) + \int_0^{t} L(\tilde t)^{-\beta} \hat g(k,\tilde t) \text e^{\text i m(k)(t_0(\tilde t) -t_0(t)) } \dd{\tilde t}   .\label{eqn-symbol-inversion}
\end{align}

We will take advantage of this to allow the use of the abstract \CK{} theorem, despite the fact that the original contour dynamics equation involves an operator of order higher than 1. 

%
%

\section{\CK{} Theorem} 
\label{ch5section-applyCK}
In this section, we describe the abstract \CK{} theorem that we use to prove existence and uniqueness of solutions.
The version which we present below follows the notation  Sammartino and Caflisch,  \cite{sammartino1998zero} (see also\cite{safonovMR1338472}).
We have changed assumption \ref{Sammartino3} below so that the  Cauchy-type estimate remains valid for $\beta>\beta_0$. 

Because of this difference, we give the full proof below. The strategy  of the proof is based on the methods of \cite{lombardoMR2049030} and \cite{asanoMR1414217}.
After the proof, we also make some remarks about the differences with \cite{lombardoMR2049030}. 

\begin{defn}
Let $\rho_0>0$. A Banach scale $\left\{ X _ { \rho } , 0 < \rho < \rho _ { 0 } \right\}$ with norms $\|\cdot\|_\rho$ is  a collection of Banach spaces such that $X _ { \rho ^ { \prime } } \subset X _ { \rho ^ { \prime \prime } }$ with $\|\cdot \| _ { \rho ^ { \prime \prime } } \leq \|\cdot  \| _ { \rho ^ { \prime } }$ whenever $\rho ^ { \prime \prime } \leq \rho ^ { \prime } \leq \rho _ { 0 }$.
\end{defn}
 
 \begin{defn}
 Given a Banach scale $X_\rho$, $\tau>0$ and $0<\rho\le \rho_0$ and $R>0$, we define:
 \begin{enumerate}
     \item $X _ { \rho , \tau }$ to be the set of all functions $u ( t )$ from $[ 0 , \tau ]$ to $X _ { \rho }$ endowed with the norm
     \[ \| u \| _ { \rho , \tau } = \sup _ { 0 \leq t \leq \tau } \| u ( t ) \| _ { \rho }. \]
     \item $Y _ { \rho , \beta , \tau  }$ is the set of functions $u$ such that for $t\in[0,\tau]$, $u ( t )\in X_{\rho - \beta t}$,  with the norm
     \[ \| u ( t ) \| _ { \rho , \beta , \tau } = \sup _ { 0 \leq t \leq \tau } \| u ( t ) \| _ { \rho - \beta t }.\]
     \item We will denote by $X _ { \rho , \tau } ( R )$ and $Y _ { \rho , \beta , \tau } ( R )$ the balls of radius $R$ in $X _ { \rho , \tau }$ and $Y _ { \rho , \beta , \tau }$ respectively.
 \end{enumerate}
 \end{defn}
 
 \begin{thm}
 \label{thm-CK} Suppose that there exist $\rho_0>0$, $R>0, \beta_0>0,$ and $0<T<\rho_0/\beta_0,$  such that the following holds:
\begin{enumerate}[(i)]
    \myitem{(CK1)}\label{Sammartino1} For every pair $\rho,\rho'$ such that $0 < \rho' < \rho < \rho_0 - \beta_0 T$ and every $u\in X_{\rho,T}(R)$, the function $F(t,u): [0,T)  \to X_{\rho'}$ is continuous.
    \myitem{(CK2)}\label{Sammartino2} For every $\rho$ such that $0 < \rho \leq \rho _ { 0 } - \beta _ { 0 } T$,     the function $F ( t , 0 ) : [ 0 , T )   \rightarrow X _ { \rho , T } ( R )$ is continuous in $t$, and
    \[ \| F ( t , 0 ) \| _ { \rho _ { 0 } - \beta _ { 0 } t } \leq R _ { 0 } < R.\]
    \myitem{(CK3)}\label{Sammartino3}   For  any numbers $\beta \ge \beta_0$,  $s<t< \min(T,\rho_0/\beta)$, $\rho'>0$, function $\rho(s)$ such that $0 < \rho'<\rho(s) < \rho_0-\beta s $, and functions $u _ { 1 } , u _ { 2 } \in Y _ { \rho _ { 0 } , \beta, T^* } ( R )$ we have for a constant $C$ independent of $\beta$,
    \[ \left\| F \left( t , u _ { 1 } \right) - F \left( t , u _ { 2 } \right) \right\| _ { \rho ^ { \prime } } \leq C \int _ { 0 } ^ { t } \frac { \left\| u _ { 1 }(s) - u _ { 2 } (s)\right\| _ { \rho ( s ) } } { \rho ( s ) - \rho' } \dd{s}. \]
\end{enumerate}
Then there exist $\beta > \beta_0$, and $T^*\le T$  such that there is a unique $u$ belonging to $Y_{\rho_0,\beta,T^*}(R)$ that solves the equation
\[ u = F(t,u).\]
 \end{thm}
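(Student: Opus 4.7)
The plan is to construct $u$ as the limit of the Picard iterates $u^{(0)}(t) \equiv 0$, $u^{(n+1)}(t) := F(t, u^{(n)}(t))$, in the ball $Y_{\rho_0, \beta, T^*}(R)$ for $\beta > \beta_0$ and $T^* > 0$ to be chosen. Using the monotonicity of the scale $\|\cdot\|_{\rho_0 - \beta t} \leq \|\cdot\|_{\rho_0 - \beta_0 t}$ for $\beta \geq \beta_0$, the base case $u^{(1)} = F(t, 0)$ lies in this ball by (CK2). The goal is then to show inductively that the iterates remain in the ball and that the sequence $\{u^{(n)}\}$ is Cauchy.

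The core estimate applies (CK3) to the differences $w^{(n)} := u^{(n+1)} - u^{(n)}$. Because $F$ loses regularity at each application, one cannot work at a single fixed scale; following the classical Nirenberg--Nishida approach, one chooses a nested family of regularity levels $\phi_n(t) := \rho_0 - \beta t - \delta_n$ with $\delta_n \uparrow \delta^* < \rho_0 - \beta T^*$, and applies (CK3) at step $n+1$ with $\rho' = \phi_{n+1}(t)$ and $\rho(s) = \phi_n(s)$. The integral factor $\int_0^t (\rho(s) - \rho')^{-1}\,\dd{s}$ can then be controlled in terms of $\beta$ and the differences $\delta_{n+1} - \delta_n$; with a judicious choice such as $\delta_n = \delta^*(1 - 1/(n+1))$ and $\beta$ sufficiently large, an inductive argument yields a bound of the form
\[ \|w^{(n)}(t)\|_{\phi_n(t)} \leq M_0 \cdot r^n \]
for some $r < 1$, hence a convergent geometric sum. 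A simultaneous verification that $R_0 + \sum_n \|w^{(n)}\| < R$ maintains the ball hypothesis so that (CK3) remains applicable, and produces $u = u^{(0)} + \sum_n w^{(n)} \in Y_{\rho_0, \beta, T^*}(R)$.

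The main obstacle is constructing this nested sequence together with its accompanying inductive estimate: one must balance the shrinking gap $\delta_{n+1} - \delta_n$ (which controls the singularity of the integrand in (CK3)) against the cumulative loss of regularity over $n$ iterations. The hypothesis that the constant $C$ in (CK3) is \emph{independent of} $\beta$ is precisely what allows us to take $\beta$ large so that the contractive factor obtained from the integral dominates, giving $r < 1$. Once convergence in $Y_{\rho_0, \beta, T^*}(R)$ is established, hypothesis (CK1) permits passing to the limit in $u^{(n+1)} = F(t, u^{(n)})$, producing the fixed point $u = F(t, u)$. Uniqueness follows by applying (CK3) to two hypothetical solutions $u_1, u_2 \in Y_{\rho_0, \beta, T^*}(R)$ and iterating the resulting integral inequality with the same nested-level trick to force $u_1 \equiv u_2$.
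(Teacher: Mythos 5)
Your overall architecture (Picard iteration started from $0$, base case from \ref{Sammartino2}, contraction from \ref{Sammartino3}, limit passage via \ref{Sammartino1}) matches the paper's, but the specific device you propose for the core estimate --- nested levels $\phi_n(t)=\rho_0-\beta t-\delta_n$ with constant shifts $\delta_n\uparrow\delta^*$ --- does not close. With $\rho'=\phi_{n+1}(t)$ and $\rho(s)=\phi_n(s)$ you get $\rho(s)-\rho'=\beta(t-s)+\epsilon_n$ where $\epsilon_n=\delta_{n+1}-\delta_n$, so the factor produced by \ref{Sammartino3} is
\[
\int_0^t\frac{\dd{s}}{\beta(t-s)+\epsilon_n}=\frac1\beta\log\Bigl(1+\frac{\beta t}{\epsilon_n}\Bigr).
\]
Since $\sum_n\epsilon_n\le\delta^*<\infty$ forces $\epsilon_n\to0$, these factors are unbounded in $n$ for any fixed $\beta$ and $t>0$; for your choice $\epsilon_n\sim\delta^*/n^2$ they grow like $(2\log n)/\beta$, and the partial products of $C(2\log n)/\beta$ eventually increase without bound. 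So no choice of large $\beta$ yields the claimed uniform geometric bound $\|w^{(n)}\|\le M_0 r^n$; the gap $\rho(s)-\rho'$ must be taken proportional to the distance $\rho_0-\beta s-\rho'$ to the critical line, not to a constant. A second, independent problem: even if convergence held, your limit is only controlled at level $\rho_0-\beta t-\delta^*$, strictly below $\rho_0-\beta t$, so you have not placed $u$ in $Y_{\rho_0,\beta,T^*}(R)$ as the theorem requires.

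The paper resolves both issues at once by working with the weighted norm $\|u\|^{(\gamma,\beta)}=\sup\,(1-\beta t/(\rho_0-\rho'))^\gamma\|u(t)\|_{\rho'}$, in which \ref{Sammartino3} with $\rho(s)=\tfrac12(\rho'+\rho_0-\beta s)$ gives a genuine contraction with factor $C2^{1+\gamma}/(\gamma\beta)$, and by ``nesting in the slopes'' rather than in constant shifts: the $k$-th difference is estimated in the norm with slope $b_k=\beta(1-2^{-k})<\beta$, and the comparison $\|u\|_{\rho_0,\beta,T^*}\le(\beta/(\beta-b_k))^\gamma\|u\|^{(\gamma,b_k)}=2^{\gamma k}\|u\|^{(\gamma,b_k)}$ converts those bounds into control of the full $Y_{\rho_0,\beta,T^*}$ norm at the exact level $\rho_0-\beta t$, with the loss $2^{\gamma k}$ absorbed by the $k$-fold contraction factor. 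You would need to replace your constant-shift scheme by something of this type (or by the classical Nishida-type weight $(t/(\rho_0-\rho))^n$) for the argument to go through.
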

  \begin{proof} 
Let $\beta_0,\rho_0,T,R$ be fixed constants as in the theorem. 
We introduce the following weighted Banach space for $\gamma \in (0,1)$ arbitrary but fixed,  $\beta\gg 1$ to be chosen later, and $T^*=T^*(\beta):=\min(T,\rho_0/\beta)\le T$,
\begin{align}
 \mathbb S^{\gamma,\beta} &= \{ u:[0,T^*) \to X_{\rho_0 - \beta T^*} :  \|u\|^{(\gamma,\beta)} <\infty \} ,
 \end{align}
 where the weighted norm $\|u\|^{(\gamma,\beta)}$ is defined by
 \begin{align}
 \|u\|^{(\gamma,\beta)} &= \sup_{\substack{t<T^*\\ 0<\rho'<\rho_0 - \beta t} } \left(1-\frac{\beta t}{\rho_0 - \rho'}\right)^\gamma \|u(t)\|_{\rho'}. \label{defn-gammabetanorm}
  \end{align}
  Note that $\|u\|^{(\gamma,\beta)} \le \|u\|_{\rho_0,\beta,T^*}$. 
  If $0<\tilde\beta<\beta$, then $    \rho_0-\beta t < \rho_0 - \tilde \beta t$, so making the choice $\rho' = \rho_0 - \beta t$ we have
  \[ \left(1-\frac{\tilde \beta t}{\rho_0 - \rho'}\right)^\gamma  = \left(\frac{\beta - \tilde \beta}{\beta}\right)^\gamma.\] 
  This implies the following inequalities for $0<\tilde\beta<\beta$,
  \begin{align}
\|u\|^{(\gamma,\beta)} \le \|u\|_{\rho_0,\beta,T^*} \le\left(\frac{\beta}{\beta - \tilde \beta}\right)^\gamma \|u\|^{(\gamma,\tilde\beta)}. \label{beta-inclusion}
  \end{align}
%
  \subsection*{Contraction-type inequality}
  Here, we prove that for any $\beta>\beta_0$, $\gamma\in(0,1)$, and for any $u,v\in Y_{\rho_0,\beta,T^*}(R)$, we have (note that this implies $u\in \mathbb S^{\gamma,\beta}$)
  \begin{align} 
      \|F(t,u)-F(t,v)\|^{(\gamma,\beta)} \le  \frac{C  2^{1+\gamma}\|u-v\|^{(\gamma,\beta)}}{\gamma \beta}.\label{ineq-contraction}
  \end{align}
In particular, if
\begin{align}
    \beta > \frac{C 2^{1+\gamma}}\gamma ,\label{beta-restriction-1}
\end{align} then $F$ is a contraction. Define for $0<\rho'<\rho_0 - \beta s$,  $s<T^*$,
\[\rho(s) := \frac{\rho' + \rho_0 - \beta s}2 .\] 
Since $T^* < \frac{\rho_0 }\beta$,  $\rho' < \rho(s) \le \rho_0-\beta_0 s$, so we can apply \ref{Sammartino3}.  If we define $\lambda(s)$ by $\rho(s)  = \rho' + \frac{\lambda(s)}2$, i.e. 
\begin{align}
\lambda(s) := \rho_0 - \rho'-\beta s,   \label{eqn-lambdaACK}  
\end{align}
then
\begin{align} 
\rho(s) - \rho' = \frac{\lambda(s)}2 = \rho_0-\rho(s)-\beta s. \label{eqn-lambda2ACK}
\end{align}
So from \ref{Sammartino3}, we obtain for $t<T^*$,
\begin{align}
    &\|F(t,u) - F(t,v)\|_{\rho'} 
    \\
    & \le C \int_0^t \frac{\|u-v\|_{\rho(s)}}{\rho(s) - \rho'} \dd{s}
    \\
    & = C \int_0^t \frac{\|u-v\|_{\rho(s)}}{\rho(s) - \rho'}\cdot 
    \underbrace{
        \frac{(\rho_0 - \rho(s) - \beta s)^\gamma }{(\rho_0 - \rho(s))^\gamma }
    }_{
        =\big(1-\frac{\beta s}{\rho_0-\rho(s)}\big)^\gamma 
    }
     \frac{(\rho_0 - \rho(s))^\gamma } {(\rho_0 - \rho(s) - \beta s)^\gamma }\dd{s} \label{contraction-3rdline}
    \\
    &\le C (\rho_0-\rho')^\gamma \|u-v\|^{(\gamma,\beta)} \int_0^t \frac{\dd{s}}{(\lambda(s)/2)^{1+\gamma} } \label{contraction-4thline}
    \\
    &=C (\rho_0-\rho')^\gamma 2^{1+\gamma}\|u-v\|^{(\gamma,\beta)} \int_0^t \frac{\dd{s}}{(\rho_0-\rho' - \beta s)^{1+\gamma}} \label{contraction-5thline}
    \\
    &=\frac{C (\rho_0-\rho')^\gamma 2^{1+\gamma}\|u-v\|^{(\gamma,\beta)}}{\gamma \beta} \cdot \left.\frac{-1}{(\rho_0-\rho' - \beta s)^{\gamma}}  \right|_{s=0}^t \label{contraction-6thline}
    \\
    &=\frac{C  2^{1+\gamma}\|u-v\|^{(\gamma,\beta)}}{\gamma \beta} 
\Bigg(    \underbrace{ \frac{(\rho_0-\rho')^\gamma}{(\rho_0-\rho' - \beta t)^{\gamma}}
    }_{
        =\big(1-\frac{\beta t}{\rho_0-\rho' }\big)^{-\gamma} > 1 
    }
       - 1 \Bigg)
    \\
     &\le \frac{C  2^{1+\gamma}\|u-v\|^{(\gamma,\beta)}}{\gamma \beta} \cdot \frac{1}{\left (1 - \frac{\beta t}{\rho_0-\rho'}\right )^{\gamma}}.
\end{align}
In going from \eqref{contraction-3rdline} to \eqref{contraction-4thline}, we used the definition of $\|\cdot\|^{(\gamma,\beta)}$ in \eqref{defn-gammabetanorm}, $ \rho(s) >   \rho'$ and \eqref{eqn-lambda2ACK} 
 and then \eqref{eqn-lambdaACK} is used to obtain \eqref{contraction-5thline}.

Multiplying both sides by $\left (1 - \frac{\beta t}{\rho_0-\rho'}\right )^{\gamma}$ and taking a supremum over $t$ and $\rho$ with $t<T^*, \rho'<\rho_0-\beta t$ yields the desired inequality \eqref{ineq-contraction}.
\subsection*{Iteration scheme}
Set $u_0:=0$ and inductively define $u_{n} := F(t,u_{n-1})$. Then \ref{Sammartino2} implies that
\begin{align} \|u_1\|_{\rho_0,\beta,T^*} \le \|u_1\|_{\rho_0,\beta_0,T} \le R_0 < R. \label{eqn-u1-norm} \end{align}
The goal is to iteratively apply \eqref{ineq-contraction}. For this, we need to show that $u_n \in Y_{\rho_0,\beta_0,T}(R)$ for every $n>1$. This will introduce a second condition, depending on the difference $R-R_0>0$, requiring that $\beta$ must be large enough.
\subsection*{Control of norm of $u_n$}
Define the auxiliary sequence $b_k$ ($k\ge 0$)  by
\begin{align}
 b_k = \beta \left (1-\frac1{2^k}\right).  \label{eqn-beta-n}
\end{align}
Note that $b_k$ is an increasing sequence with $b_k \to \beta$. Also 
\begin{align}
    b_k \in (\beta/2,\beta). \label{eqn-b-n-ineqs}
\end{align}
Since we want to apply \eqref{ineq-contraction} (which is only valid for $\beta>\beta_0$) with $b_k$ in place of $\beta$, our construction requires
\begin{align}
    \beta > 2\beta_0 .\label{beta-restriction2}
\end{align}
Also, note that $\left(\frac{\beta}{\beta -  b_k}\right)^\gamma = 2^{\gamma k} $.  Therefore, for $k\ge 0$, by choosing $u = u_{k+1}-u_k$ in the second inequality of \eqref{beta-inclusion} and then applying \eqref{ineq-contraction} $k$ times, 
\begin{align}
 \|u_{k+1}-u_k\|_{\rho_0,\beta,T^* }
 & \le 2^{\gamma k}   \|u_{k+1}-u_k\|^{(\gamma,b_k)} \\
&\le 2^{\gamma k} \left ( \frac{C2^{1+\gamma}}{\gamma b_k}  \right)^{k} \|u_1-u_0\|^{(\gamma,b_k)} \\
&\le  \left ( \frac{C4^{1+\gamma}}{\gamma \beta }  \right)^{k} \|u_1\|^{(\gamma,b_k)}\label{ineq-gamma-norms} ,
\end{align}
since $u_0=0$, and $b_k>\beta/2$ from \eqref{eqn-b-n-ineqs}. Then \eqref{eqn-u1-norm} and the first inequality of \eqref{beta-inclusion} imply
\begin{align}
    \|u_1\|^{(\gamma,b_k)} \le \|u_1\|_{\rho_0,b_k,T^*}   \le R_0.\label{ineq-fromCK2}
\end{align} 
Applying \eqref{ineq-gamma-norms}, \eqref{ineq-fromCK2}, and \eqref{eqn-beta-n} we obtain
\begin{align}
\|u_{n}\|_{\rho_0,\beta,T^*} 
& \le \sum_{k=0}^{n-1}  \| u_{k+1}-u_{k}\|_{\rho_0,\beta,T^*}    \\
& \le  \sum_{k=0}^{n-1} \left ( \frac{C4^{1+\gamma}}{\gamma \beta }  \right)^{k} \|u_1\|^{(\gamma,b_k)}   \\
& \le  R_0 \sum_{k=0}^{\infty} \left ( \frac{C4^{1+\gamma}}{\gamma \beta }  \right)^{k}  \\
&= \frac{R_0\gamma \beta }{\gamma \beta - C4^{1+\gamma}} .
\end{align}
Therefore, in order to ensure that $u_n\in Y_{\rho_0,\beta,T^*}(R)$, we need 
\begin{align}
    \beta > \frac{C 4^{\gamma+1}}{\gamma (R-R_0)} \label{beta-restriction3}.
\end{align}
\subsection*{Existence and uniqueness of solution}
Let $\beta$ be  large enough so that
\[ \beta > \max \left(  \frac{C 2^{1+\gamma}}\gamma , 2\beta_0, \frac{C 4^{\gamma+1}}{\gamma (R-R_0)}  \right). \] Then \eqref{beta-restriction-1}, \eqref{beta-restriction2} and \eqref{beta-restriction3} are satisfied. Thus for some $R_0<R_1<R$, $u_n \in Y_{\rho_0,\beta,T^*}(R_1)$, and the contraction inequality \eqref{ineq-contraction} implies that there is a unique  solution to $u = F(t,u)$ in $Y_{\rho_0,\beta,T^*}(R)$.
 \end{proof}
 \begin{rem}
As mentioned earlier, our assumption \ref{Sammartino2} does not match the analogous assumption of \cite{sammartino1998zero}. Furthermore, while $F$ indeed satisfies a contraction-type inequality in the weighted norm $\|u\|^{(\gamma)}:=\sup_{\rho'<\rho_0-\beta t} (\rho_0-\rho'-\beta t)^\gamma \|u(t)\|_{\rho'}$ for $\beta\gg 1$  (which is what is proven in \cite{lombardoMR2049030}), it does not seem possible to control the $Y_{\rho_0,\beta,T^*}$ norm of the successive iterates $u_n$ because the right inequality of $\eqref{beta-inclusion}$ is not true for the norm $\|\cdot\|^{(\gamma)}$, since if one tries to similarly use $\rho'=\rho_0-\beta t$ to bound $\|u\|_{\rho_0,\beta,T^*}$, one finds  possible blow-up at  $t=0$. Instead, we normalise the weight $\frac{(\rho'-\rho_0-\beta t)^\gamma }{(\rho'-\rho_0)^\gamma } = \left(1-\frac{\tilde \beta t}{\rho_0 - \rho'}\right)^\gamma$.
 \end{rem}
 
 Now, we define the Banach scale that we will use in the proof.

\begin{defn}
\label{defn-Klrho-space}
Given $l\in \mathbb { N }$ and $\rho > 0$, define the open set $U_\rho\subset \mathbb C$,
\[ U_\rho := \{z \in \mathbb C : |\Im z|<\rho\}.\]
We say that a function 
$f: U_\rho \to \mathbb R^2$ is in $K ^ { l , \rho }$ if
\begin{enumerate}
    \item $f(s+\text i \tilde s)$ is analytic and 1-periodic (i.e. with period 1) in $s$.    \item For every $|\Im s | < \rho ,$ $ \partial _ { s } ^ { \alpha } f(\Re s+\text  i\Im s) \in L ^ { 2 }_{\Re s}(\mathbb T)$, that is, square integrable as a periodic function of the real
part only.
    \item The norm $\| f \| _ {K^{ l , \rho}  }$ is finite, where
\[ \| f \| _ {K^{ l , \rho} } : = \sum _ { \alpha \leq l } \sup _ { |\tilde s | < \rho } \left\| \partial _ { x } ^ { \alpha } f ( s + i \tilde s ) \right\| _ { L ^ { 2 }_s ( \mathbb T ) }. \]
\end{enumerate}
\end{defn}
\begin{defn}
Given $l\in \mathbb { N }$ and $\rho > 0$, define $U_\rho$ as in Definition \ref{defn-Klrho-space}. We say that  $f: U_\rho^2 \to \mathbb R$ is in $K^{l,\rho}_2$ if 
\begin{enumerate}
    \item $f(s+\text i\tilde s,s_*+\text i \tilde s_*)$ is 1-periodic in $ s$ and $ s_*$, and analytic in $U_\rho^2$.
    \item For every $\alpha_1 + \alpha_2 \le l $, $\max( |\tilde s |,|\tilde s_*|) < \rho $, $  \partial _ { s } ^ { \alpha_1 }\partial_{s*}^{\alpha_2} f \in L ^ { 2 }_{s,s_*}(\mathbb T^2 )$.
      \item The norm $\| f \| _ {K_2^{ l , \rho}  }$ is finite, where
\[ \| f \| _ {K_2^{ l , \rho} } : = \sum _ { \alpha_1+\alpha_2  \leq l } \sup _ { \substack{|\tilde  s | < \rho\\ |\tilde s_*|<\rho} } \left\| \| \partial _ { s } ^ { \alpha_1 }\partial _ { s* } ^ { \alpha_2 } f ( s + \text i \tilde s, s_* + i \tilde  s_* )\|_{L^2_{s}(\mathbb T)}   \right\| _ { L ^ { 2 }_{s_*} ( \mathbb T  ) }.\]
\end{enumerate}
\end{defn}
The norm $\| f \| _ {K_2^{ l , \rho} }$ can also be written as \[ \| f \| _ {K_2^{ l , \rho} } = \sum _ { \alpha_1+\alpha_2  \leq l } \sup _ { \substack{|\tilde  s | < \rho\\ |\tilde s_*|<\rho} } \left\|  \partial _ { s } ^ { \alpha_1 }\partial _ { s* } ^ { \alpha_2 } f ( s + \text i \tilde s, s_* + i \tilde  s_*  )  \right\| _ { L ^ { 2 }_{s,s_*} ( \mathbb T^2  ) }.\] 
 If $\mathcal F f(k) := \int_{\mathbb T } f(s) \text e^{-2\pi \text  i ks} \dd{s}, \ k\in\mathbb Z$ denotes the Fourier transform of $f$, then the $K^{l,\rho}$ norm is equivalent to the weighted Sobolev norm \[\| f \| _ {K^{ l , \rho} } = \| \text e^{2\pi  \rho|k|}(1+|k|^l) \mathcal F f(k)\|_{\ell^2_k( \mathbb Z)},\]which can be seen by analytic continuation in $s_0$ of the well-known identity for the Fourier transform
\[ \mathcal F_x[f(s-s_0)](k) = \text e^{-2\pi \text i s_0k } \mathcal F f(k) .\]
The following standard result follows from the Sobolev embedding theorem (see for instance \cite{evans1998partial}). 
\begin{prop}[Banach Algebra]
\begin{enumerate}
    \item For $l\ge 1$, $K^{l,\rho}$ is a Banach algebra:
\begin{align}  \|uv\|_{K^{l,\rho}} \lesssim_{l,\rho} \|u\|_{K^{l,\rho}}\|v\|_{K^{l,\rho}}.
\end{align}
    \item For $l\ge 2$, $K^{l,\rho}_2$ is a Banach algebra:
    \begin{align}  \|uv\|_{K_2^{l,\rho}} \lesssim_{l,\rho} \|u\|_{K_2^{l,\rho}}\|v\|_{K_2^{l,\rho}}.
\end{align}

\end{enumerate} 
\end{prop}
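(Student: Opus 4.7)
My plan is to reduce both parts to the classical fact that the Sobolev space $H^l(\mathbb{T}^d)$ is a Banach algebra whenever $l > d/2$, and to transplant this slice by slice onto the complex strip. The key observation is that each norm is defined by first taking $L^2$ of a genuine periodic function of a real variable on a horizontal slice $\{\Im s = \tilde s\}$, then supping over $\tilde s$ inside a finite sum over multi-indices. Since the supremum of a product is bounded by the product of suprema, the idea is to prove the bilinear estimate on each fixed slice, uniformly in the imaginary parts, and then take sups.

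On a fixed slice $\Im s = \tilde s$, write $u_{\tilde s}(s) := u(s + \text i \tilde s)$ and similarly for $v$. For each $\alpha \le l$, the Leibniz rule gives
\[ \partial_s^\alpha(u_{\tilde s} v_{\tilde s}) = \sum_{\beta=0}^\alpha \binom{\alpha}{\beta} \partial_s^\beta u_{\tilde s} \cdot \partial_s^{\alpha-\beta} v_{\tilde s}. \]
I would apply H\"older by placing into $L^\infty_s(\mathbb T)$ whichever factor carries fewer derivatives and keeping the other in $L^2_s(\mathbb T)$. Since $l \ge 1$, for every split $\beta + (\alpha - \beta) \le l$ at least one of $\beta$ or $\alpha - \beta$ is at most $l-1$, and the one-dimensional Sobolev embedding $H^1(\mathbb T) \hookrightarrow L^\infty(\mathbb T)$ bounds that factor by at most one extra derivative in $L^2$; because $\beta + 1 \le l$ (or $\alpha - \beta + 1 \le l$), the resulting $L^2$-norms are dominated termwise by quantities appearing in the definition of $\|u\|_{K^{l,\rho}}$ and $\|v\|_{K^{l,\rho}}$. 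Supping over $|\tilde s| < \rho$ and summing in $\alpha, \beta$ then yields the desired estimate, with a constant depending on $l$ (through the binomial coefficients and the 1D Sobolev constant) and harmlessly on $\rho$.

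For $K_2^{l,\rho}$, the argument is identical but carried out on the torus $\mathbb T^2$ with multi-indices $(\alpha_1, \alpha_2)$, H\"older in $L^2_{s,s_*}$ versus $L^\infty_{s,s_*}$, and the two-dimensional Sobolev embedding $H^2(\mathbb T^2) \hookrightarrow L^\infty(\mathbb T^2)$. It is precisely this embedding that forces $l \ge 2$: for every Leibniz split of a multi-index of total order at most $l$, one factor has total order at most $l-2$, hence lies in $H^2 \hookrightarrow L^\infty$.

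There is no real obstacle here; the argument is entirely classical. The only point requiring mild care is uniformity in $\tilde s$ (and $\tilde s_*$), but this is automatic, since the 1D and 2D Sobolev constants depend only on $\mathbb T$ and $\mathbb T^2$ respectively and not on the imaginary part of the slice. The monotonicity step replacing $\sup$ of a product by the product of $\sup$s introduces no loss, so a single clean application of Leibniz--H\"older--Sobolev on each slice suffices.
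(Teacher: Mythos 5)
Your slice-by-slice reduction is sound: the norms are built from $L^2$ norms of genuine periodic functions on each horizontal slice, the Sobolev constants on $\mathbb T$ and $\mathbb T^2$ do not depend on the imaginary part, and bounding the supremum of a product by the product of suprema loses nothing. The one-variable case is complete as written: in every Leibniz split $\beta+(\alpha-\beta)\le l$ with $l\ge 1$, the factor with fewer derivatives has order at most $l-1$ and can be placed in $L^\infty(\mathbb T)$ via $H^1(\mathbb T)\hookrightarrow L^\infty(\mathbb T)$ at the cost of one admissible extra derivative.

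The two-variable case, however, has a genuine gap precisely at the endpoint $l=2$, which the paper explicitly permits (it fixes $l$ to be \emph{any} integer $\ge 2$). Your combinatorial claim that every Leibniz split of a multi-index of total order at most $l$ leaves one factor of total order at most $l-2$ is false for $l=2$: splitting $\partial_s\partial_{s_*}$ as $(1,0)+(0,1)$, or $\partial_s^2$ as $(1,0)+(1,0)$, leaves both factors with total order $1>l-2=0$. Neither factor can then be put in $L^\infty(\mathbb T^2)$, since $H^1(\mathbb T^2)\not\hookrightarrow L^\infty(\mathbb T^2)$ and invoking $H^2\hookrightarrow L^\infty$ would require $1+2=3>l$ derivatives on that factor. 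The standard repair for these balanced splits is $L^4\times L^4\to L^2$ H\"older combined with the Ladyzhenskaya/Gagliardo--Nirenberg embedding $H^1(\mathbb T^2)\hookrightarrow L^4(\mathbb T^2)$, giving $\|\partial u\,\partial v\|_{L^2}\le\|\partial u\|_{L^4}\|\partial v\|_{L^4}\lesssim\|u\|_{H^2}\|v\|_{H^2}$; alternatively one can argue on the Fourier side, using the weighted-$\ell^2$ characterization of the norms stated in the paper and the inequality $(1+|k|)^{s}\lesssim (1+|k-j|)^{s}+(1+|j|)^{s}$ together with the fact that $e^{2\pi\rho|k|}\le e^{2\pi\rho|k-j|}e^{2\pi\rho|j|}$. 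With that one split repaired, your argument goes through; note the paper offers no proof at all beyond citing the Sobolev embedding theorem, so a corrected version of your write-up is precisely the intended argument.
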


We now choose the Banach scale $X_\rho$ to apply Theorem \ref{thm-CK}. The purpose of $l$ is only to obtain the above Banach Algebra property, and is fixed to be any number $l\ge 2$. We now use the spaces $K^{l,\rho}£$ and $K^{l,\rho}_2$ to define our Banach scale. The choice is made so that a representative element of $X_\rho$ will be of the form $(\mycurve,\mycurve_s,\mycurve_{ss},\Gamma)$.
\begin{defn}
Let $l\ge 2$ be arbitrary but fixed. The Banach scale $X_\rho$ is
\[ X_\rho := K^{l,\rho} \times K^{l,\rho} \times K^{l,\rho} \times K^{l,\rho}_2. \]
\end{defn}
\subsection{Adapting the equation for Theorem \ref{thm-CK}.}

In order to apply Theorem \ref{thm-CK}, we will need to:
\begin{enumerate}[Step 1]
    \item Rewrite the  equations so that the evolution begins from zero initial data, which allows the iteration to begin using the estimate \ref{Sammartino2}.
    \item     Rewrite the evolutions in a suitable integral form that satisfies the continuity assumption \ref{Sammartino1} and the required Cauchy estimate \ref{Sammartino3}.
\end{enumerate} 
We now implement these steps.
\subsubsection*{Step 1}
Define the initial conditions 
\begin{align}
     f_0(s) &= f(s,0), \\ g_0(s) &= g(s,0),\\ h_0(s) &= h(s,0),\\ \Gamma_0(s,s_*) &= \Gamma(s,s,_*,0).
\end{align}
Then define the new variables $\tilde f,\tilde g,\tilde h, \tilde \Gamma $ by
\begin{align}
\tilde f(s,t) &= f(s,t) - f(s,0), \\ 
\tilde g(s,t) &= g(s,t) - g(s,0), \\ 
\tilde h(s,t) &= h(s,t) - h(s,0), \\ 
\tilde \Gamma(s,s_*,t) &= \Gamma(s,s_*,t) - \Gamma(s,s_*0).
\end{align}
Then the evolution equations in terms of $\tilde f,\tilde g,\tilde h, \tilde \Gamma$ are
\begin{align}
\tilde f_t &= E_1[\tilde g + g_0, \tilde h + h_0, \tilde\Gamma + \Gamma_0], \label{eqn-initial-data-form1} \\
    \tilde g_t &= E_2[\tilde g + g_0,\tilde h + h_0,\tilde h_s + h'_0,\tilde\Gamma+\Gamma_0 ],\label{eqn-initial-data-form2} \\
    \tilde h_t &= (\tilde \Gamma(0,0) + \Gamma_0(0,0) )^\beta  \mathcal H_\beta (\tilde h + h_0), \\ 
    &\quad + E_3[\tilde g + g_0 ,\tilde h+ h_0,\tilde h_s + h'_0,\tilde \Gamma + \Gamma_0], \label{eqn-initial-data-form3} \\ 
    \tilde \Gamma_t &= E_4[\tilde\Gamma+\Gamma_0,\tilde g+ g_0,F_2[\tilde g + g_0,\tilde h+ h_0,\tilde h_s + h'_0,\tilde \Gamma + \Gamma_0 ]]. \label{eqn-initial-data-form4}
\end{align}

\subsubsection*{Step 2}
The strategy is to integrate the equations for $\tilde f,\tilde g,\tilde \Gamma$ in time, and invert the operator $(\del_t - (\tilde \Gamma(0,0) + \Gamma_0(0,0))\mathcal H_\beta $ for the $h$ equation. If we define the vector of functions $\ACKu$ and initial conditions $\ACKu_0$ by
\[\ACKu(s,s_*,t):=\begin{pmatrix}
    \tilde f(s,t) \\
     \tilde g(s,t)\\ 
     \tilde h(s,t)\\ 
     \tilde \Gamma(s,s_*,t) 
\end{pmatrix},\ 
\ACKu_0(s,s_*) = \begin{pmatrix}
    f_0(s)\\
    g_0(s)\\
    h_0(s)\\
    \Gamma_0(s,s_*)
\end{pmatrix},  \]
then we can write the evolution equations as 
\begin{align}
\ACKu &= F[\ACKu], \\  F[\ACKu] &= \begin{pmatrix}
     F_1[\ACKu_0,\ACKu ]\\
     F_2[\ACKu_0,\ACKu]\\
     F_3[\ACKu_0,\ACKu, \nabla  \ACKu]\\
     F_4[\ACKu_0,\ACKu]
\end{pmatrix} , \label{eqn-abstract-form-of-evo}
\end{align}
where $\nabla \ACKu =\nabla_{s,s_*} \ACKu  $ is the spatial gradient in $s$ and $s_*$, and the component operators $ F_i$ of $F : X_\rho \to \cup_{\rho > 0} X_\rho $ are
\begin{align}
    F_1 &:= f_0 + \int_0^t E_1 \dd{t} ,\label{eqn-tildeT1} \\
     F_2 &:= g_0 + \int_0^t E_2 \dd{t}, \label{eqn-tildeT2} \\
     F_3 &:= \Big( (\del_t - (\Gamma_0(0,0,t ) + \tilde\Gamma(0,0))\mathcal H_\beta \Big)^{-1} E_3 ,\label{eqn-tildeT3} \\
     F_4 &:= \Gamma_0 + \int_0^t E_4 \dd{t}. \label{eqn-tildeT4}
\end{align}
(The omitted inputs of $F_i$ are as in \eqref{eqn-abstract-form-of-evo}, and the omitted inputs of $E_i$ are as in \eqref{eqn-initial-data-form1}, \eqref{eqn-initial-data-form2}, \eqref{eqn-initial-data-form3}, and \eqref{eqn-initial-data-form4}.) The inverse operator in \eqref{eqn-tildeT3} is defined by \eqref{eqn-symbol-inversion}. 
This completes the derivation of the equation to which Theorem \ref{thm-CK} can be applied: it  only remains to check that $F$ satisfies the assumptions of Theorem \ref{thm-CK}.

\subsection{Estimates}\label{subsect-estimates}

In this section, we give some estimates and explain how they are used to show that our system satisfies \ref{Sammartino3}. 

Many terms require a very similar approach; we will focus on  a few representative terms that illustrate the main approach. 

We write  $U_1=U_1(u),U_2=U_2(u),\dots$ to denote the images of $u$ under any of the following well behaved operators:  $U_i(u)=u$,  $U_i(u)(s,s_*) = u(s+s_*,c)$,  $U_i(u) = u(c,c_*)$, for constants $c,c_*$, or $U_i(u)=\mathcal I(u)$.  (The $c$ in $u(s+s_*,c)$  is arbitrary---the second input of  $u$ only affects $\tilde \Gamma$, and $\tilde \Gamma$ with first input $s+s_*$ doesn't appear.) For any collection of $M$ such operators $U_1,\dots,U_M$ ($M\ge 1$), we write $\mathbf U_M(u) = (U_1(u),\dots, U_M(u))$ for the function that takes values in $\mathbb C^{M'}$, where $M'$ is an integer depending on $M$ and the choices of $U_i$.

The operator $F_1[\ACKu_0,\ACKu]$ 
is a sum of
 time integrals  of products of terms of the form 
\[ \Phi(\mathbf U_M(u)(s,s_*)),\]
where $\Phi:\mathbb C^{M'} \to \mathbb C^N$ is analytic, or terms of the form  
\[ \int_{\mathbb T} \Phi(\mathbf U_M(u)(s,s_*))  \dd{\mu(s_*)}, \]
where $\mu$ is a finite measure in $s_*$, or terms of the form
\[ \int_{\mathbb T} \int_{\mathbb T} \Phi(\mathbf U_M(u)(s,s_*))   \dd{\mu(s_*)}\dd{s}.\]
For each one of these terms, we have the following elementary lemmas.

\begin{lem}[Triangle inequality for time integral]
For any function $\ACKu \in X_\rho$, $\|\int_0^t \ACKu \dd{t} \|_{\rho} \le \int_0^t \| \ACKu\|_{\rho} \dd{t}$.
\end{lem}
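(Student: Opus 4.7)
The statement is an instance of the standard triangle inequality for Bochner integrals in the Banach space $X_\rho = K^{l,\rho} \times K^{l,\rho} \times K^{l,\rho} \times K^{l,\rho}_2$.  My plan is to reduce to the scalar inequality in each factor by unwinding the definitions of the component norms and then applying Minkowski's integral inequality for $L^2$ on the innermost layer.

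First I would note that the product norm on $X_\rho$ is (up to convention) the sum of the four component norms, and the time integral is taken componentwise, so it suffices to prove the inequality for each of the factors $K^{l,\rho}$ and $K^{l,\rho}_2$ separately.  For a $K^{l,\rho}$-valued time-dependent function $w(\tau)$, using that $\partial_s^\alpha$ passes through the time integral (by dominated convergence, using $w \in X_\rho$ so all derivatives are uniformly bounded in the relevant $L^2$ norm on $[0,t]$) and analytic continuation in $\tilde s$ commutes with $\int_0^t d\tau$, one has for each multi-index $\alpha\le l$ and each $|\tilde s|<\rho$ the pointwise identity $\partial_s^\alpha \bigl(\int_0^t w(\tau)\,d\tau\bigr)(s+i\tilde s) = \int_0^t \partial_s^\alpha w(\tau)(s+i\tilde s)\,d\tau$.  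Minkowski's integral inequality in $L^2_s(\mathbb T)$ then gives
\[
 \Bigl\| \int_0^t \partial_s^\alpha w(\tau)(\cdot + i\tilde s)\,d\tau \Bigr\|_{L^2_s(\mathbb T)}
 \le \int_0^t \bigl\| \partial_s^\alpha w(\tau)(\cdot + i\tilde s)\bigr\|_{L^2_s(\mathbb T)} d\tau.
\]
Taking the supremum over $|\tilde s|<\rho$ on both sides (moving the sup inside the nonnegative integrand on the right) and summing over $\alpha\le l$ yields $\|\int_0^t w \,d\tau\|_{K^{l,\rho}} \le \int_0^t \|w(\tau)\|_{K^{l,\rho}}\,d\tau$.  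The $K^{l,\rho}_2$ factor is handled identically, with $L^2_{s,s_*}(\mathbb T^2)$ replacing $L^2_s(\mathbb T)$ and the sup taken over the two imaginary parts $|\tilde s|,|\tilde s_*|<\rho$.

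There is essentially no conceptual obstacle: the only small point is justifying the exchange of $\partial_s^\alpha$ and $\int_0^t$ (which is immediate from the hypothesis $u\in X_\rho$) and the exchange of the $\tilde s$-supremum with the time integral, which is the standard Minkowski argument.  Summing the four component inequalities reproduces $\|\int_0^t \ACKu\,d\tau\|_\rho \le \int_0^t \|\ACKu(\tau)\|_\rho\,d\tau$.
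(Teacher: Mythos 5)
Your argument is correct: reducing to the component spaces, interchanging $\partial_s^\alpha$ and the analytic continuation with the time integral, and then applying Minkowski's integral inequality followed by $\sup_{\tilde s}\int_0^t \le \int_0^t \sup_{\tilde s}$ is exactly the standard justification. The paper states this lemma without proof, treating it as elementary, so your write-up simply supplies the routine verification it omits; there is no gap.
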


\begin{lem}[local Lipschitz estimates] \label{lem-analytic-is-lip}
Let 
 $\Phi(x_1,\dots,x_{M'})$ be analytic on an open set containing the set $A = \{ x: \sum_{i=1}^{M'} |x_i| \le R\} $. Then $\Phi$ is locally Lipschitz on $A$,
 \[| \Phi(x_1,\dots, x_{M'})-\Phi(y_1,\dots,y_{M'})| \lesssim_{R,\Phi} \sum_{i=1}^{M'} |x_i - y_i|. \]  For $\|\ACKu\|_\rho\le R,$ we have the estimate
\begin{align}
 \| \Phi(\mathbf U_M(u_1))-\Phi(\mathbf U_M(u_2)) \|_{K^{l,\rho}_2} \lesssim_{R,\Phi,l} \|\ACKu_1 - \ACKu_2 \|_{\rho}.
\end{align}
If in addition $\mu$ is a finite measure on $\mathbb T$, then we have the estimates
\begin{align*}      
     \left \| \int_{\mathbb T} \Phi(\mathbf U_M(u_1))(\cdot ,s_*) -\Phi(\mathbf U_M(u_2))(\cdot ,s_*)   \dd{\mu(s_*)} \right\|_{K^{l,\rho}} 
     \lesssim
      \|\ACKu_1 - \ACKu_2\|_{ \rho} ,\qquad \\
       \left | \int_{\mathbb T} \int_{\mathbb T} \Phi(\mathbf U_M(u_1))(s ,s_*) -\Phi(\mathbf U_M(u_2))(s,s_*)  \dd{\mu(s_*)} \dd{s} \right | 
     \lesssim
      \|\ACKu_1 - \ACKu_2\|_{ \rho}. \qquad 
\end{align*}
\end{lem}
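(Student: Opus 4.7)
The first conclusion, that $\Phi$ is locally Lipschitz on $A$, is immediate: analyticity on an open set $V\supset A$ together with compactness of the convex $\ell^1$-ball $A$ gives $\operatorname{dist}(A,\partial V)>0$, and Cauchy's estimates on polydiscs of that radius centered at each point of $A$ bound $\nabla\Phi$ uniformly on $A$. The mean value inequality along the segment joining $x,y\in A$ (which stays in $A$ by convexity) then yields the stated Lipschitz bound, with constant depending only on $R$ and $\Phi$.

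For the operator-level inequality, the plan is to reduce to the Banach algebra property of $K^{l,\rho}_2$ (valid for $l\ge 2$) via the fundamental theorem of calculus
\begin{align*}
\Phi(\mathbf{X}_1) - \Phi(\mathbf{X}_2) = \sum_{i=1}^{M'}\int_0^1 (\partial_i\Phi)\bigl(\mathbf{X}_2 + t(\mathbf{X}_1-\mathbf{X}_2)\bigr)\,dt \,\cdot\,\bigl(X_{1,i} - X_{2,i}\bigr),
\end{align*}
where $\mathbf{X}_j := \mathbf{U}_M(u_j)$. A preliminary check is that each elementary operator $U_i$ sends $X_\rho$ into $K^{l,\rho}_2$ with norm controlled by $\|u\|_\rho$: for $U_i(u)=u$, $u(s+s_*,c)$, or $u(c,c_*)$ this follows from the natural embedding $K^{l,\rho}\hookrightarrow K^{l,\rho}_2$ combined with invariance under translation inside the analytic strip $U_\rho$; for $U_i(u)=\mathcal{I}(u)$, which is an average over $\tau\in[0,1]$, one applies Minkowski's integral inequality to commute the norm past the integral.

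The main technical step, and where I expect the work to concentrate, is proving that $(\partial_i\Phi)(\mathbf{X}_2+t(\mathbf{X}_1-\mathbf{X}_2))$ is bounded in $K^{l,\rho}_2$ uniformly in $t\in[0,1]$, whenever $\|u_1\|_\rho,\|u_2\|_\rho\le R$. Here I would cover the compact set $A$ by finitely many closed polydiscs strictly inside $V$; on each, $\partial_i\Phi$ admits a Taylor expansion with Cauchy-controlled coefficients $|c_\alpha|\le M\,r^{-|\alpha|}$. Substituting the Banach-algebra element $\mathbf{X}_2+t(\mathbf{X}_1-\mathbf{X}_2)$ (shifted so the center of the polydisc corresponds to the origin of the series) and invoking the Banach algebra inequality at each order allows one to sum the resulting majorant series, provided $C_l R/r<1$; this is arranged by enlarging $V$ slightly at the outset so that the Cauchy radius $r$ exceeds the algebra constant $C_l$ times $R$. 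Combining this uniform bound with the Banach algebra product estimate on $(\partial_i\Phi)(\cdot)\cdot(X_{1,i}-X_{2,i})$ and the preliminary bound on $\|U_i(u_1)-U_i(u_2)\|_{K^{l,\rho}_2}\lesssim\|u_1-u_2\|_\rho$ yields the claimed inequality.

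The estimates involving the finite measure $\mu$ follow by Minkowski's integral inequality, which moves the Banach-space norm inside the $s_*$-integral and produces an overall factor of $|\mu|(\mathbb{T})$ multiplying the pointwise-in-$s_*$ bound just established; the scalar double-integral estimate reduces further via the Sobolev embedding $K^{l,\rho}_2\hookrightarrow L^\infty(\mathbb{T}^2)$ available for $l\ge 2$. The only genuinely delicate point in the whole argument is the composition step: the Fa\`a di Bruno bookkeeping for spatial derivatives of order up to $l$ of $\Phi\circ\mathbf{U}_M(u)$ is absorbed cleanly by the Banach algebra inequality once the radius-of-convergence condition $C_l R<r$ is secured, and everything else is an application of the triangle inequality.
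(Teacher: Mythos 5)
Your opening reduction (pointwise Lipschitz bound via compactness and the mean value inequality, then the fundamental-theorem-of-calculus decomposition of $\Phi(\mathbf X_1)-\Phi(\mathbf X_2)$, then the Banach algebra product to peel off $\|u_1-u_2\|_\rho$, then Minkowski for the $\mu$-integrals) is the right skeleton and matches the standard way to establish such estimates. The paper does not actually print a proof of this lemma, so there is no ``paper's approach'' to compare against beyond this folklore outline.

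The gap is in the central composition estimate, the claim that $(\partial_i\Phi)\bigl(\mathbf X_2+t(\mathbf X_1-\mathbf X_2)\bigr)$ is bounded in $K^{l,\rho}_2$ uniformly in $t$. You propose to Taylor-expand $\partial_i\Phi$ about polydisc centres and sum a majorant series using the Banach algebra inequality, ``provided $C_l R/r<1$,'' and you secure this by ``enlarging $V$ slightly at the outset.'' But $V$ is not at your disposal: the hypothesis only grants analyticity on \emph{some} open neighbourhood of the compact set $A$, which could be an arbitrarily thin shell, so the Cauchy radius $r$ can be as small as one likes and the condition $C_l R<r$ can fail outright. Furthermore, even with a single admissible polydisc, the Banach-algebra element $\mathbf X_2+t(\mathbf X_1-\mathbf X_2)$ is a \emph{function} of $(s,s_*)$ whose range sweeps out essentially all of $A$; there is no single centre about which to expand, and a partition of unity in the target destroys the analyticity in $s,s_*$ that membership in $K^{l,\rho}_2$ requires. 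So the majorant-series step as written does not close.

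The tool that actually works is the one you name in your final sentence but do not deploy: the Moser/Fa\`a di Bruno composition estimate, which needs no radius-of-convergence condition at all. Since $\Phi$ is analytic on an open neighbourhood of the compact $A$, all partials $D^\gamma\Phi$ with $|\gamma|\le l+1$ are uniformly bounded on $A$ (this is exactly the argument you gave for the pointwise Lipschitz bound, applied to each $D^\gamma\Phi$). By the Sobolev embedding available for $l\ge 2$, the element $\mathbf X_2+t(\mathbf X_1-\mathbf X_2)$ takes values in a fixed compact subset of the analyticity domain, uniformly in $t\in[0,1]$ and over the complex strip $U_\rho^2$, so $D^\gamma\Phi$ composed with it is bounded in $L^\infty$. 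Fa\`a di Bruno then writes $\partial_s^{\alpha_1}\partial_{s_*}^{\alpha_2}\bigl(\partial_i\Phi(\mathbf X)\bigr)$ as a finite sum of terms $D^\gamma(\partial_i\Phi)(\mathbf X)\cdot\prod_j \partial^{\beta_j}U_{i_j}(u)$; the first factor is controlled in $L^\infty$ and the product is controlled in $L^2$ by the Banach algebra (or, if one wants sharp exponents, by Gagliardo--Nirenberg interpolation between $L^\infty$ and $K^{l,\rho}_2$). This yields the uniform bound on $\|(\partial_i\Phi)(\mathbf X_2+t(\mathbf X_1-\mathbf X_2))\|_{K^{l,\rho}_2}$ with constant depending only on $R$, $\Phi$ and $l$. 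With that replacement your FTC decomposition, the Banach algebra product with $U_i(u_1)-U_i(u_2)$, and the Minkowski step for the $\mu$-integrals all go through as you describe.
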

In our application, we will use the finite measures  $\dd{\mu}(s_*)=\frac{s_*^{k+1} \dd{s_*}}{|\mysin s_*|^{\beta +  k}}$, for some $k>0$.

\begin{example}In $E_1$, the following terms appear (see \eqref{eqn-f}) :
\begin{align}
   \tilde E_1(u)
   &:=
   g(s)  \frac s{L^2} \int_{\mathbb T}\int_{\mathbb T} \Gamma(s_1,s_0)^\beta \frac{\FTC(h)(s_1,s_0)\cdot h(s_0+s_1)  s_0}  {|\mysin s_0|^\beta}  \dd{s_0}\dd{s _1} 
   \\
    &\quad-g(s)\frac 1{L^2} \int_{0}^s\int_{\mathbb T} \Gamma(s_1,s_0)^\beta \frac{\FTC(h)(s_1,s_0)\cdot h(s_0+s_1)  s_0}  {|\mysin s_0|^\beta}  \dd{s_0}\dd{s _1} .\end{align}
The first term is (at least away from $s=0,1$ before periodising) a product of the analytic function of $u$ and $u(0,0)$, $g(s) \frac{s}{L^2}$  with the double integral against $\frac{s_0}{|\mysin s_0|^\beta} \dd{s_0} \dd{s_1}$ of the analytic function
\begin{align}
    &\quad\ \Phi(u(s_1,s_0),\mathcal I(u)(s_1,s_0), u(s_0+s_1,s_0+s_1) ) 
    \\ &= \Gamma(s_1,s_0)^\beta \mathcal I(h)(s_1,s_0) \cdot h(s_0+s_1). 
\end{align} 
The second term is similar, and together with the first term, gives the analyticity at $s=0,1$ as well. Therefore, by  Lemma \ref{lem-analytic-is-lip}
, we have
\[  \| \tilde  E_1(u_1) - \tilde E_1(u_2) \|_{K^{l,\rho}} \lesssim  \|u_1 - u_2\|_{\rho}, \]
%
which after integrating in time, is stronger than the required estimate \ref{Sammartino3}. The other terms in $E_1$ are similar.
\end{example}

For $F_2$, most of the terms are also treated in a different way, except one which involves $h_s$. For this term, we will use the following Cauchy-type estimate. 

\begin{lem}[Cauchy Estimate]For any $l \ge 0,\rho\ge 0$, $\rho'\in(0,\rho)$, \label{lem-cauchy-estimate}
\[    
    \|\nabla  \ACKu\|_{\rho'} \le \frac C {\rho -\rho'} \| \ACKu\|_{\rho}.
\]
\end{lem}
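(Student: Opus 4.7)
The plan is to prove this as a standard Cauchy-type estimate, exploiting complex analyticity together with translation invariance of $L^2(\mathbb T)$ in the real direction. Since $X_\rho$ is a product of $K^{l,\rho}$ and $K^{l,\rho}_2$ spaces, and the scalar $K^{l,\rho}$ case is the prototype, I will first reduce to bounding $\|\partial_s f\|_{K^{l,\rho'}}$ by $(\rho-\rho')^{-1}\|f\|_{K^{l,\rho}}$ for $f \in K^{l,\rho}$, and then indicate the straightforward extension to the two-variable case.

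For the scalar case, fix $|\tilde s|<\rho'$ and let $\alpha \le l$. Choose $0<r<\rho-\rho'$, so that for every $\theta\in[0,2\pi)$ the point $s+\text i\tilde s+re^{\text i\theta}$ has imaginary part of modulus at most $|\tilde s|+r<\rho$, hence lies inside the strip of analyticity of $f$. By the Cauchy integral formula applied to $\partial_s^{\alpha} f$ on the circle of radius $r$ centered at $s+\text i\tilde s$,
\[
\partial_s^{\alpha+1} f(s+\text i\tilde s)
= \frac{1}{2\pi r}\int_0^{2\pi} \partial_s^{\alpha} f(s+\text i\tilde s+re^{\text i\theta})\, e^{-\text i\theta}\,\mathrm d\theta.
\]
Applying Minkowski's integral inequality in $s\in\mathbb T$ gives
\[
\|\partial_s^{\alpha+1} f(\cdot+\text i\tilde s)\|_{L^2_s(\mathbb T)}
\le \frac{1}{r}\sup_\theta \|\partial_s^{\alpha}f(\cdot +\text i\tilde s+re^{\text i\theta})\|_{L^2_s(\mathbb T)}.
\]
Because the $L^2(\mathbb T)$ norm is invariant under real translations by $r\cos\theta$, the right-hand side equals $r^{-1}\sup_\theta\|\partial_s^{\alpha} f(\cdot+\text i(\tilde s+r\sin\theta))\|_{L^2_s(\mathbb T)}$, which is in turn bounded by $r^{-1}\|f\|_{K^{l,\rho}}$ because $|\tilde s+r\sin\theta|<\rho$. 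Taking $r\uparrow\rho-\rho'$ and then $\sup_{|\tilde s|<\rho'}$, and summing over $\alpha\le l$, gives
\[
\|\partial_s f\|_{K^{l,\rho'}}\le \frac{l+1}{\rho-\rho'}\|f\|_{K^{l,\rho}},
\]
which is the desired estimate with $C=l+1$.

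For the component $\tilde\Gamma\in K^{l,\rho}_2$ one applies exactly the same Cauchy integral formula in each complex variable separately, holding the other fixed, and uses translation invariance of $L^2(\mathbb T^2)$ in the two real directions. This yields the analogous bound for both $\partial_s\tilde\Gamma$ and $\partial_{s_*}\tilde\Gamma$ in the $K^{l,\rho'}_2$ norm, and the product-space norm on $X_\rho$ then gives the stated inequality for $\nabla u$. There is no serious obstacle here; the only points requiring a moment of care are (i) the geometric constraint $r<\rho-\rho'$, which is what produces the factor $(\rho-\rho')^{-1}$, and (ii) the need to keep the $L^2$ norm in the real variable so that translation invariance can be invoked after parameterising the Cauchy circle.
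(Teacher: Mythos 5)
Your proof is correct and follows essentially the same route as the paper: apply the Cauchy integral formula for the first derivative on a circle of radius just under $\rho-\rho'$, exploit translation invariance of the $L^2(\mathbb T)$ norm in the real direction to eliminate the real shift, and pass to the limit in the radius. The only cosmetic differences are the parameterisation of the contour and the explicit constant $l+1$ (the paper does not track it); the extension to $K^{l,\rho}_2$ by applying the same argument one complex variable at a time is the same in both.
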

\begin{proof}
It is enough to prove this for the component spaces $K^{l,\rho}$ and $K^{l,\rho}_2$. We give the proof for $K^{l,\rho}$, since $K^{l,\rho}_2$ can be treated in exactly the same way. That is, we shall prove for $u\in K^{l,\rho}$,
\[    \| \partial_s u\|_{K^{l,\rho'}} \leq \frac{C}{\rho-\rho'} \|u\|_{K^{l,\rho}}. \]
From the definition of the $K^{l,\rho}$ norm in Definition \ref{defn-Klrho-space}, it suffices to prove that for every $v = \partial_s^r u$, $r=0,1,\dots,l$,
\begin{align}
 \sup_{|\tilde s|<\rho'} \|\partial_s v(s+\text i \tilde s)\|_{L^2_s} \le \frac{C}{\rho-\rho'} \sup_{|\tilde s|<\rho} \| v(s+\text i \tilde s)\|_{L^2_s}. \label{just-Cauchy-for-Klrho}    
\end{align}

Set $0< \delta < \rho-\rho'$. Then the well-known Cauchy Integral Formula for a derivative gives for $z=s+\text i \tilde s\in \mathbb C$, $|\tilde s| < \rho'$,
\[ \partial_s v(s+\text i \tilde s ) = \frac{1}{2\pi \text i} \int_{|z-w|=\delta} \frac{v(w)}{(z-w)^2}\dd{w}= \frac{1}{2\pi \text i} \int_{|w|=\delta} \frac{v(s+\text i\tilde s +w)}{w^2}\dd{w}. \]
Taking the $L^2_s$ norm and using the periodicity of $v$ in the real part to obtain \eqref{used-periodicity},
\begin{align}
     \|\partial_s v(s+\text i \tilde s) \|_{L^2_s} 
     &\le \frac1{2\pi} \int_{|w|=\delta} \frac{ \|v(s+\text i  \tilde s + w)\|_{L^2_s}}{|w|^2} \dd{l(w)} \\
     &= \frac1{2\pi} \int_{|w|=\delta} \frac{ \|v(s+\text i  (\tilde s +\Im w) )\|_{L^2_s}}{|w|^2} \dd{l(w)} \label{used-periodicity} \\
     &\le \frac{1}{2\pi \delta^2}\int_{|w|=\delta} \dd{l(w)} \sup_{|\tilde{\tilde s}|<\rho'+\delta} \|v(s+\text i \tilde{\tilde s})\|_{L^2_s}\\
     & \le  \frac1{\delta} \sup_{|\tilde{\tilde s}|<\rho} \|v(s+\text i \tilde{\tilde s})\|_{L^2_s}.
\end{align}
($\dd{l}$ is the arc-length measure on $|w|=\delta$.) Taking a limit $\delta \to \rho-\rho'$, and then a supremum over all $\tilde s$ with $|\tilde s|<\rho'$ leads to \eqref{just-Cauchy-for-Klrho}. By the earlier discussion, we have finished the proof of Lemma \ref{lem-cauchy-estimate}.
\end{proof}

\begin{example}
    The first term of \eqref{eqn-g} is
    \[ \tilde E_2 (u) =  -\int_{\mathbb T} \Gamma(s,s_*)^\beta \frac{\FTC(h_s) s_*}  {|\mysin s_*|^\beta}  \dd{s_*}. \]
To show that this term satisfies the assumption \ref{Sammartino3}, first use the local Lipschitz estimates of Lemma \ref{lem-analytic-is-lip} but treating the integrand as an analytic function of $u$ and $\mathcal I(\nabla u)$. This yields a local Lipschitz estimate
\[  \|\tilde E_2(u_1) - \tilde E_2(u_2)\|_{K^{l,\rho}} \lesssim \|u_1-u_2\|_{\rho} + \|\nabla (u_1 -  u_2)\|_{\rho}. \]
Now apply the Cauchy estimate for the second term; this shows that \ref{Sammartino3} is satisfied. 
\end{example}

In a similar way, $F_4$ can be controlled by using the bounds on $F_2$, since $F_2$ appears in $F_4$.

The term $E_3$, which uses the auxillary operator used in \eqref{eqn-symbol-inversion}  to define $F_3$ is defined by \eqref{eqn-T3}. It involves the following terms where $\nabla \ACKu$ appears,
\begin{align}
    E_{31} &= \int_{\mathbb T} \FTC(\beta \Gamma^{\beta-1} \del_{s_*}\Gamma  )(s,s_*)s_*\frac{h_s (s+s_*) -  h_s(s) }{|\mysin s_*|^\beta } \dd{s_*},  \\
E_{32} &=- 2\beta\int_{\mathbb T} \Gamma(s,s_*)^{\beta+2} \frac{\FTC(h_s) (\FTC(g)\cdot \FTC(h))s_*^3}{|\mysin s_* |^{\beta+2}} \dd{s_* },\\ 
E_{33}&=- \beta\int_{\mathbb T} \Gamma(s,s_*)^{\beta+2} \frac{\FTC(h) (|\FTC(h)|^2+\FTC(g) \cdot \FTC(h_s) )s_*^3}{|\mysin s_* |^{\beta+2}} \dd{s_*}, \\
E_{34} &= h_s G ,
\end{align}
where $G$ is a collection of terms involving only $\ACKu$ and not $\nabla \ACKu$, defined by  the last four lines of \eqref{eqn-T4}. These terms are controlled by combining the above lemmas with the skew-symmetry (Lemma \ref{lem-skew-sym}) and the Cauchy-type estimate of Lemma \ref{lem-cauchy-estimate}.
\begin{thm}
    Let $z_0:\mathbb T \to\mathbb R^2$ be an analytic curve with $\Gamma_0 = \Gamma(z_0) \in L^\infty (\mathbb T^2)$. Then there exists $T^*>0,\rho_0>0$ and $\beta > 0$ such that a unique solution to \eqref{eqn-equiv-SF-system} exists in the space $u \in Y_{\rho_0,\beta,T^*}$.
\end{thm}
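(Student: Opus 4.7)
The plan is a direct application of Theorem \ref{thm-CK} to the system \eqref{eqn-abstract-form-of-evo}; essentially all of the work has been set up in Sections \ref{ch5section-computing-ds}--\ref{ch5section-applyCK}, and what remains is to verify (CK1)--(CK3) for the operator $F = (F_1, F_2, F_3, F_4)$ acting on the scale $X_\rho = K^{l,\rho} \times K^{l,\rho} \times K^{l,\rho} \times K^{l,\rho}_2$. First I would prepare the initial data. After reparameterising $z_0$ so that $\partial_s z_0 \cdot \partial_s^2 z_0 \equiv 0$, the analyticity of $z_0$ gives $f_0, g_0, h_0 \in K^{l, 2\rho_0}$ for some $\rho_0 > 0$; the hypothesis $\Gamma_0 \in L^\infty(\mathbb T^2)$ combined with the Analyticity Lemma for $\Gamma$ gives $\Gamma_0 \in K_2^{l, 2\rho_0}$ (after possibly shrinking $\rho_0$) together with the lower bound $\Gamma_0 \geq c > 0$. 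Choose $R$ so that $\|u_0\|_{\rho_0} < R/4$.

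Hypothesis (CK1) follows from the continuity in $t$ of each $E_i$ evaluated at a fixed $u \in X_{\rho, T}(R)$: all integrands are bounded by $L^1$ functions in $s_*$ (after the rewriting in Section \ref{ch5section-computing-ds} that replaces every $\fd\partial_s^k\mycurve$ by $\FTC(\partial_s^{k+1}\mycurve) s_*$ and uses $\Gamma$ to absorb the singular factor $|\fd\mycurve|^{-\beta}$), so dominated convergence applies; for $F_3$ one uses the explicit Fourier representation \eqref{eqn-symbol-inversion}. For (CK2), evaluation at $u = 0$ returns the initial data plus a time integral of $E_i[u_0]$, so shrinking $T$ yields $\|F(t, 0)\|_{\rho_0 - \beta_0 t} \leq R_0 < R$ for any prescribed $R_0 > \|u_0\|_{\rho_0}$.

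The heart of the proof is (CK3). For $F_1, F_2, F_4$, which are of the form $u_0 + \int_0^t E_i \dd{\tau}$, the triangle inequality for time integrals reduces matters to estimating the integrand; the local Lipschitz estimate of Lemma \ref{lem-analytic-is-lip}, combined with the Banach algebra property of $K^{l,\rho}$ and $K^{l,\rho}_2$, controls every product/integral of analytic functions of $u$ by $\|u_1 - u_2\|_\rho$, and time integration introduces the factor needed for the Cauchy form. The only terms that lose a derivative are those involving $h_s$ (e.g.\ the $\tilde E_2$ term in the Example); for these the Cauchy estimate of Lemma \ref{lem-cauchy-estimate} produces a factor $\frac{1}{\rho(s) - \rho'}$ which matches the kernel in (CK3) after time integration.

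The main obstacle is the $F_3$ component, because $E_3$ contains $h_s$ through the highest-order convolution term and through $E_{31}, \dots, E_{34}$. The key observation, already established in Lemma \ref{lem-skew-sym} and the discussion around \eqref{eqn-symbol-inversion}, is that $\mathcal H_\beta$ has purely imaginary symbol, so $(\partial_t - L^{-\beta}\mathcal H_\beta)^{-1}$ is an $L^2$-isometry at each frequency; consequently it preserves the weighted Fourier norms defining $K^{l,\rho}$, and the derivative loss in $E_{31}, \dots, E_{34}$ is controlled exactly as for $F_2$ via Lemma \ref{lem-cauchy-estimate} (the contribution of $\tilde\Gamma(0,0)$ to the coefficient $L(t)^{-\beta}$ is absorbed as an analytic perturbation). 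Once these estimates are in hand, Theorem \ref{thm-CK} delivers a unique fixed point $u \in Y_{\rho_0, \beta, T^*}(R)$, and taking $\mycurve := f_0 + \tilde f$ (the first component plus its initial datum) yields the analytic solution of \eqref{eqn-equiv-SF-system}; uniqueness follows from the uniqueness statement of Theorem \ref{thm-CK} together with the fact that any smooth solution of \eqref{eqn-equiv-SF-system} satisfies \eqref{eqn-abstract-form-of-evo} by the derivations in Sections \ref{ch5section-prelim}--\ref{ch5section-rewriteEqn}.
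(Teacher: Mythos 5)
Your proposal is correct and takes essentially the same approach as the paper: apply Theorem~\ref{thm-CK} to the rewritten system \eqref{eqn-abstract-form-of-evo} on the scale $X_\rho = K^{l,\rho}\times K^{l,\rho}\times K^{l,\rho}\times K^{l,\rho}_2$, using Lemma~\ref{lem-analytic-is-lip}, the Cauchy estimate of Lemma~\ref{lem-cauchy-estimate}, and the skew-symmetry of $\mathcal H_\beta$ (Lemma~\ref{lem-skew-sym} together with \eqref{eqn-symbol-inversion}) to verify \ref{Sammartino1}--\ref{Sammartino3}. The paper's own proof is terser, essentially deferring the verification of the hypotheses to the estimates sketched in Section~\ref{subsect-estimates}; the only quibble with your write-up is that $(\del_t - L^{-\beta}\mathcal H_\beta)^{-1}$ is not itself an isometry at each frequency (the propagator $\text e^{\text i m(k)t_0(t)}$ is unitary, but the Duhamel integral is not), though the bound you need follows directly.
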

\begin{proof}
    Since $\mycurve_0$ is analytic on $\mathbb T$, there exists $\rho>0$ such that $\mycurve_0$ admits an analytic continuation to a complex neighbourhood  $\mathbb T +\text i  (-\rho,\rho)$ of $\mathbb T$,   that belongs to the space $K^{l,\rho}$ (recall that we have already fixed some $l\ge 2$). From $\mycurve_0$, we define the initial data  to  \eqref{eqn-abstract-form-of-evo} as
    \begin{align}
         \ACKu_0=(\mycurve_0,\del_s \mycurve_0, \del_s^2 \mycurve_0, \Gamma(\mycurve_0)) \in X_{\rho_0}.\label{eqn-abstract-init-data}
    \end{align}
    The operator $F$ in \eqref{eqn-abstract-form-of-evo} is continuous, satisfying \ref{Sammartino1} for Theorem \ref{thm-CK}, and there exists $\beta_0,T$ such that \ref{Sammartino2} is satisfied. With the estimates and earlier discussion in this subsection, \ref{Sammartino3} is satisfied, so Theorem \ref{thm-CK} applies, proving the result.
\end{proof}


\section{Acknowledgements}
Calvin Khor is supported by the studentship part of the ERC consolidator project $\text n^0$ $616797$. Jos\'e L. Rodrigo is partially supported by the ERC consolidator project  $\text n^0$ $616797$.


\begin{thebibliography}{10}

\bibitem{asanoMR1414217}
K~Asano.
\newblock On the abstract {C}auchy--{K}owalewski theorem.
\newblock In {\em Functional analytic methods in complex analysis and
  applications to partial differential equations ({T}rieste, 1993)}, pages
  185--197. World Sci. Publ., River Edge, NJ, 1995.

\bibitem{atkins2013almost}
Z. Atkins.
\newblock Almost sharp fronts: limit equations for a two-dimensional model with
  fractional derivatives, 2013.

\bibitem{chae2012generalized}
D.~Chae, P.~Constantin, D.~C\'ordoba, F.~Gancedo, and J.~Wu.
\newblock Generalized {S}urface {Q}uasi-{G}eostrophic equations with singular
  velocities.
\newblock {\em Comm. Pure Appl. Math.},
  65(8):1037--1066, 2012.

\bibitem{chae2011inviscid}
D.~Chae, P.~Constantin, \& J.~Wu.
\newblock Inviscid models generalizing the two-dimensional {E}uler and the
  {S}urface {Q}uasi-{G}eostrophic equations.
\newblock {\em Arc. for Rat. Mech. and Analysis}, 202(1):35--62, Oct
  2011.

\bibitem{CMT1} P. Constantin, A.  Majda \& E. Tabak. Singular
front formation in a model for quasigesotrophic flow". {\em Phys.
Fluids} {6}(1), 9-11,(1994).

\bibitem{CMT2} P. Constantin, A.  Majda \& E. Tabak. Formation of
strong fronts in the $2-D$ quasigeostrophic thermal active
scala. {\em Nonlinearity} 7(6) 1495--1533, (1994).


\bibitem{gancedo2016arXiv160506663C}
A.~{C{\'o}rdoba}, D.~{C{\'o}rdoba}, \& F.~{Gancedo}.
\newblock {Uniqueness for {SQG} patch solutions}.
\newblock {\em ArXiv e-prints}, May 2016.

\bibitem{cordoba2004almost}
D.~C{\'o}rdoba, C.~Fefferman, \& J.L. Rodrigo.
\newblock Almost sharp fronts for the {S}urface {Q}uasi-{G}eostrophicequation.
\newblock {\em Proceedings of the National Academy of Sciences of the United
  States of America}, 101(9):2687--2691, 2004.

\bibitem{evans1998partial}
L.C. Evans.
\newblock {P}artial {D}ifferential {E}quations.
\newblock {\em Graduate Studies in Mathematics}, 19, 1998.

\bibitem{fefferman2012spine}
C.~Fefferman, G.~Luli, and J.L.~Rodrigo.
\newblock The spine of an {SQG} almost-sharp front.
\newblock {\em Nonlinearity}, 25(2):329, 2012.

\bibitem{fefferman2011analytic}
C.~Fefferman \& J.L. Rodrigo.
\newblock Analytic sharp fronts for the {S}urface {Q}uasi-{G}eostrophic
  equation.
\newblock {\em Comm. in Math. Phys.}, 303(1):261--288, 2011.

\bibitem{fefferman2012almost}
C.~Fefferman \& J.L. Rodrigo.
\newblock Almost sharp fronts for {SQG}: the limit equations.
\newblock {\em Comm. in Math. Phys.}, 313(1):131--153, 2012.

\bibitem{fefferman2015construct}
C.L. Fefferman \& J.L. Rodrigo.
\newblock Construction of {A}lmost-{S}harp {F}ronts for the {S}urface
  {Q}uasi-{G}eostrophic equation.
\newblock {\em Arch. for Rat. Mech. and Analysis}, 218(1):123 -- 162,
  2015.

\bibitem{gancedo2008existence}
F.~Gancedo.
\newblock Existence for the $\alpha$-patch model and the {QG} sharp front in
  {S}obolev spaces.
\newblock {\em Advances in Mathematics}, 217(6):2569 -- 2598, 2008.

\bibitem{gancedo2018arXiv181100530G}
F.~{Gancedo} \& N.~{Patel}.
\newblock {On the local existence and blow-up for generalized SQG patches}.
\newblock {\em arXiv e-prints}, page arXiv:1811.00530, November 2018.

\bibitem{kiselevdoi:10.1002/cpa.21677}
A. Kiselev, Y. Yao, \& A. Zlato\v{s}.
\newblock Local regularity for the modified sqg patch equation.
\newblock {\em Comm. Pure Appl. Math.},
  70(7):1253--1315, 2017.

\bibitem{lombardoMR2049030}
M.~C. Lombardo, M.~Cannone, \& M.~Sammartino.
\newblock Well-posedness of the boundary layer equations.
\newblock {\em SIAM J. Math. Anal.}, 35(4):987--1004, 2003.

\bibitem{majda2002}
A.~Majda \& A.~Bertozzi.
\newblock {\em Vorticity and Incompressible Flow}.
\newblock Cambridge Texts in Applied Mathematics, 2002.

\bibitem{ohkitani2012asymptotics}
K.~Ohkitani.
\newblock Asymptotics and numerics of a family of two-dimensional generalized
  surface quasi-geostrophic equations.
\newblock {\em Phys. of Fluids}, 24(9):095101, 2012.


\bibitem{rodrigo2004vortex}
J.L. Rodrigo.
\newblock The {V}ortex {P}atch problem for the {S}urface {Q}uasi-{G}eostrophic
  equation.
\newblock {\em Proc. of the National Academy of Sciences},
  101(9):2684--2686, 2004.

\bibitem{rodrigo2005evolution}
J.L. Rodrigo.
\newblock On the evolution of {S}harp {F}ronts for the {Q}uasi-{G}eostrophic
  equation.
\newblock {\em Comm. Pure Appl. Math.}, 58(6):821--866,
  2005.

\bibitem{safonovMR1338472}
M.V. Safonov.
\newblock The abstract {C}auchy--{K}ovalevskaya theorem in a weighted {B}anach
  space.
\newblock {\em Comm. Pure Appl. Math.}, 48(6):629--637, 1995.

\bibitem{sammartino1998zero}
M.~Sammartino \& R.E. Caflisch.
\newblock Zero viscosity limit for analytic solutions, of the {N}avier-{S}tokes
  equation on a half-space {I}. existence for {E}uler and {P}randtl equations.
\newblock {\em Comm. in Math. Phys.}, 192(2):433--461, 1998.

\bibitem{zabusky197996}
N.J. Zabusky, M.H. Hughes, \& K.V. Roberts.
\newblock Contour dynamics for the euler equations in two dimensions.
\newblock {\em J. of Comp. Phy.}, 30(1):96 -- 106, 1979.

\bibitem{hunter2019twofront}
J. K. Hunter, J. Shu, and Q. Zhang.
\newblock Two-front solutions of the {SQG} equation and its generalizations.
\newblock {\em arXiv preprint}, 2019.

\bibitem{hunter2018global}
J. K. Hunter, J. Shu, and Q. Zhang.
\newblock Global Solutions of a {S}urface {Q}uasi-{G}eostrophic {F}ront {E}quation.
\newblock {\em arXiv preprint}, 2018.

\bibitem{hunter2018regularized}
J. K. Hunter and J. Shu.
\newblock Regularized and approximate equations for sharp fronts in the surface quasi-geostrophic equation and its generalizations.
\newblock {\em Nonlinearity}, 31(6):2480--2517, 2018.

\bibitem{hunter2018local}
J. K. Hunter, J. Shu, and Q. Zhang.
\newblock Local well-posedness of an approximate equation for SQG fronts,
\newblock{\em  J. Math. Fluid Mech.}, 20(4):1967--1984, 2018.

\bibitem{hunter2019contour}
J. K. Hunter, J. Shu, and Q. Zhang.
\newblock Contour dynamics for Surface Quasi-Geostrophic fronts.
\newblock {\em arXiv preprint}, 2019.

\bibitem{khor2019sfasf}
C. Khor and J. L. Rodrigo.
\newblock On sharp fronts and almost-sharp fronts for singular SQG.
\newblock In preparation.

\end{thebibliography}

 \end{document}